\newtheorem{thm}{Theorem}[section]
\newtheorem{prop}[thm]{Proposition}
\newtheorem{lemma}[thm]{Lemma}
\newtheorem{cor}[thm]{Corollary}
\theoremstyle{definition} \newtheorem{ex}[thm]{Example}
\newtheorem{hyp}[thm]{Hypothesis}
\theoremstyle{definition} \newtheorem{rmk}[thm]{Remark}
\newcommand{\cc}{\mathbb{C}}
\newcommand{\rr}{\mathbb{R}}
\newcommand{\qq}{\mathbb{Q}}
\newcommand{\oper}[1]{\operatorname{#1}} 
\newcommand{\zz}{\mathbb{Z}}
\newcommand{\ff}{\mathbb{F}}
\newcommand{\aff}{\mathbb{A}}
\newcommand{\proj}{\mathbb{P}}
\newcommand{\Aut}{\mathrm{Aut}}
\newcommand{\Out}{\mathrm{Out}}
\newcommand{\Spec}{\mathrm{Spec}}
\newcommand{\id}{\mathrm{id}}
\newcommand{\im}{\mathrm{im}}
\newcommand{\et}{\mathrm{\acute{e}t}}
\newcommand{\tp}{\mathrm{top}}
\newcommand{\alg}{\mathrm{alg}}
\newcommand{\unr}{\mathrm{un}}
\newcommand{\an}{\mathrm{an}}
\title{Prime-to-$p$ \'{e}tale fundamental groups of punctured projective lines over strictly Henselian fields}
\author[Hasson]{Hilaf Hasson}
  \address{Hilaf Hasson: University of Maryland, College Park, MD 20742, USA}
  \email{hilaf@math.umd.edu}
  \author[Yelton]{Jeffrey Yelton}
  \address{Jeffrey Yelton: Emory University, Atlanta, GA 30322, USA}
  \email{jeffrey.yelton@emory.edu}
\begin{document}

\maketitle

\begin{abstract}

Let $K$ be the fraction field of a strictly Henselian DVR of residue characteristic $p \geq 0$ with algebraic closure $\bar{K}$, and let $\alpha_{1}, ... , \alpha_{d} \in \proj_{K}^{1}(K)$. In this paper, we give explicit generators and relations for the prime-to-$p$ \'etale fundamental group of $\proj_K^1\smallsetminus\{\alpha_1,...,\alpha_d\}$ that depend (solely) on their intersection behavior. This is done by a comparison theorem that relates this situation to a topological one. Namely, let $a_{1}, ... , a_{d}$ be distinct power series in $\cc[[x]]$ with the same intersection behavior as the $\alpha_i$'s, converging on an open disk centered at $0$, and choose a point $z_{0} \neq 0$ lying in this open disk. We compare the natural action of $\oper{Gal}(\bar{K} / K)$ on the prime-to-$p$ \'{e}tale fundamental group of $\proj_{\bar{K}} \smallsetminus \{\alpha_{1}, ... , \alpha_{d}\}$ to the topological action of looping $z_0$ around the origin on the fundamental group of $\proj_{\mathbb{C}}^1\smallsetminus\{a_1(z_0),...,a_d(z_0)\}$. This latter action is, in turn, interpreted in terms of Dehn twists. A corollary of this result is that every prime-to-$p$ $G$-Galois cover of $\proj_{\bar K}^1\smallsetminus\{\alpha_1,...,\alpha_d\}$ satisfies that its field of moduli (as a $G$-Galois cover) has degree over $K$  dividing the exponent of $G / Z(G)$.

\end{abstract}

\section{Introduction} \label{sec1}

Let $R$ be any strictly Henselian discrete valuation ring of characteristic $0$ with residue characteristic $p \geq 0$ and uniformizer $\pi$.  Let $K$ be the fraction field of $R$, and fix an algebraic closure $\bar{K}$ of $K$.  Let $\alpha_{1}, ... , \alpha_{d}$ be $K$-points of $\proj_{K}^{1}$ for some $d \geq 2$, and let $P$ be a geometric point lying over a $K$-point of $\proj_{K}^{1}$ different from $\alpha_{1}, ... , \alpha_{d}$.    We have the fundamental short exact sequence 
\begin{equation} \label{short exact sequence of algebraic fundamental groups} 1 \to \pi_{1}^{\et}(\proj_{\bar{K}} \smallsetminus \{\alpha_{1}, ... , \alpha_{d}\}, P) \to \pi_{1}^{\et}(\proj_{K}^{1} \smallsetminus \{\alpha_{1}, ... , \alpha_{d}\}, P) \to G_{K} \to 1. \end{equation}
The $K$-point $P$ corresponds to a section $\Spec(K) \to \proj_{K}^{1} \smallsetminus \{\alpha_{1}, ... , \alpha_{d}\}$, which induces a splitting of the short exact sequence (\ref{short exact sequence of algebraic fundamental groups}).  This gives rise to an algebraic monodromy action of $G_{K}$ on $\pi_{1}^{\et}(\proj_{\bar{K}}^{1} \smallsetminus \{\alpha_{1}, ... , \alpha_{d}\}, P)$, which we denote by $\rho_{\alg} : G_{K} \to \Aut(\pi_{1}^{\et}(\proj_{\bar{K}}^{1} \smallsetminus \{\alpha_{1}, ... , \alpha_{d}\}, P))$.  We write $\bar{\rho}_{\alg} : G_{K} \to \Out(\pi_{1}^{\et}(\proj_{\bar{K}}^{1} \smallsetminus \{\alpha_{1}, ... , \alpha_{d}\}, P))$ for the induced outer action.

For any group $G$, we write $\widehat{G}$ for its profinite completion.  For any profinite group $G$, let $G^{(p')}$ denote the maximal prime-to-$p$ quotient of $G$ if $p > 0$, and let $G^{(p')} = G$ if $p = 0$.  Since $G^{(p')}$ is a characteristic quotient of $G$, any action on $G$ induces an action on $G^{(p')}$. Let $\rho_{\alg}^{(p')}:G_K\rightarrow \oper{Aut}(\pi_{1}^{\et}(\proj_{\bar{K}}^{1} \smallsetminus \{\alpha_{1}, ... , \alpha_{d}\}, P))^{(p')})$ denote the action induced by $\rho_{\alg}$, and let $\bar \rho_{\alg}^{(p')}$ be the induced outer action. While a complete description of $\bar \rho_{\alg}$ remains out of reach, the main theorem of this paper (Theorem \ref{thm comparison}), together with Theorem \ref{thm main topological}, gives a complete and explicit description of $\bar \rho_{\alg}^{(p')}$.

\subsection{Comparison of topological and algebraic monodromy actions} \label{sec1.1}

Fix an integer $d \geq 2$, and choose distinct power series $a_{1}, ... , a_{d} \in \cc[[x]]$ with positive radii of convergence.  For each $i$ and any complex number $z \in \cc$, we write $a_{i}(z)$ for the evaluation of $a_{i}$ at $z$; if $a_{i}$ diverges at $z$, then we consider $a_{i}(z)$ to be the infinity point of $\proj_{\cc}^{1}$.  For any real number $\varepsilon > 0$, we write $B_{\varepsilon} := \{z \in \cc \ | \ |z| < \varepsilon\}$ for the (open) ball of radius $\varepsilon$ and let $B_{\varepsilon}^{*} = B_{\varepsilon} \smallsetminus \{0\}$.  We will always make the following assumption.

\begin{hyp} \label{hypothesis varepsilon}

The real number $\varepsilon > 0$ is small enough that each power series $a_{i}$ converges on the closure of $B_{\varepsilon}$, and for any $z \in B_{\varepsilon}^{*}$, we have $a_{i}(z) \neq a_{j}(z)$ for $i \neq j$.

\end{hyp}

We define a family $\mathcal{F} \to B_{\varepsilon}^{*}$ of punctured Riemann spheres by letting 
$$\mathcal{F} = ((\proj_{\cc}^{1})^{\an} \times B_{\varepsilon}^{*}) \smallsetminus \bigcup_{i = 1}^{d} \{(a_{i}(z), z) \ | \ z \in 
B_{\varepsilon}^{*}\}.$$
It follows from Hypothesis \ref{hypothesis varepsilon} that the obvious projection map $\mathcal{F} \to B_{\varepsilon}^{*}$ is a fiber bundle.  For each $z \in B_{\varepsilon}^{*}$, write $\infty_{z}$ for the point in the fiber $\mathcal{F}_{z} := (\proj_{\cc}^{1})^{\an} \smallsetminus \{a_{1}(z), ... , a_{d}(z)\}$.  Since the punctured disk $B_{\varepsilon}^{*}$ has trivial $j^{\oper{th}}$ homotopy group for $j \geq 2$, the associated long exact homotopy sequence of this fiber bundle truncates.  Therefore, after choosing a basepoint $z_{0} \in B_{\varepsilon}^{*}$, we have the short exact sequence of fundamental groups 
\begin{equation} \label{short exact sequence of topological fundamental groups} 1 \to \pi_{1}(\mathcal{F}_{z_{0}}, \infty_{z_0}) \to \pi_{1}(\mathcal{F}, \infty_{z_{0}}) \to \pi_{1}(B_{\varepsilon}^{*}, z_{0}) \to 1. \end{equation}

There is an obvious ``infinity section" of the family $\mathcal{F} \to B_{\varepsilon}^{*}$ given by $z \mapsto \infty_{z}$.  This section induces a splitting of the short exact sequence (\ref{short exact sequence of topological fundamental groups}).  This gives rise to a monodromy action of $\pi_{1}(B_{\varepsilon}^{*}, z_{0})$ on $\pi_{1}(\mathcal{F}_{z_{0}}, \infty_{z_0})$, which we denote by $\rho_{\tp} : \pi_{1}(B_{\varepsilon}^{*}, z_{0}) \to \Aut(\pi_{1}(\mathcal{F}_{z_{0}}, \infty_{z_0}))$.

We remark that taking profinite completions of the terms in the sequence (\ref{short exact sequence of algebraic fundamental groups}) produces a short exact sequence. Indeed, the profinite completion functor is right exact (\cite[Proposition 3.2.5]{zalesskii}); left-exactness of the resulting sequence now follows from \cite[Proposition 4]{leftexact} after observing that the groups appearing in (\ref{short exact sequence of algebraic fundamental groups}) are finitely generated, the sequence is split, and $\pi_{1}(B_{\varepsilon}^{*}, z_{0})$ is residually finite. We further remark that the natural maps from $\pi_{1}(B_{\varepsilon}^{*}, z_{0})$ and $\pi_{1}(\mathcal{F}_{z_{0}}, \infty_{z_0})$ to their respective profinite completions are embeddings, as these are free groups and therefore residually finite. By the Five Lemma, it follows that the natural map from $\pi_{1}(\mathcal{F}, \infty_{z_{0}})$ to its profinite completion is also injective. It follows that  $\rho_{\tp}$ extends uniquely to a continuous action of $\widehat{\pi}_{1}(B_{\varepsilon}^{*}, z_{0})$ on $\widehat\pi_{1}(\mathcal{F}_{z_{0}}, \infty_{z_0})$; we denote this action also by $\rho_{\tp}$.  We write $\bar{\rho}_{\tp} : \widehat{\pi}_{1}(B_{\varepsilon}^{*}, z_{0}) \to \Out(\widehat \pi_{1}(\mathcal{F}_{z_{0}}, \infty_{z_0}))$ for the induced outer monodromy action, and we write $\rho_{\tp}^{(p')}$ (resp. $\bar{\rho}_{\tp}^{(p')}$) for the action (resp. the outer action) induced by $\rho_{\tp}$ on $\widehat \pi_{1}(\mathcal{F}_{z_{0}}, \infty_{z_{0}})^{(p')}$. 
For $1 \leq i < j \leq d$, we write $e_{i, j} = v_{x}(a_i-a_j)$, where $v_x$ is the $x$-adic valuation, which is the intersection multiplicity of the power series $a_{i}$ and $a_{j}$ viewed as divisors on the surface $\proj_{\cc[[x]]}^{1}$.  Meanwhile, since $\proj_{R}^{1}$ is proper over $\Spec(R)$, the $K$-points $\alpha_{i} \in \proj_{K}^{1}$ extend to $R$-points of $\proj_{R}^{1}$, which we also denote by $\alpha_{i}$.  For $1 \leq i < j \leq d$, we write $E_{i, j}$ for the intersection multiplicity of the $R$-points $\alpha_{i}$ and $\alpha_{j}$ as divisors on the surface $\proj_{R}^{1}$.

We may visualize $\Spec(R)$ as an infinitesimally small disk and $\Spec(K)$ as an infinitesimally small punctured disk.  Our main theorem asserts that the action of $G_{K}$ on $\pi_{1}^{\et}(\proj_{\bar{K}}^{1} \smallsetminus \{\alpha_{1}, ... , \alpha_{d}\}, P)^{(p')}$ agrees with this intuition: namely, it is ``the same" as the monodromy action of $\pi_{1}(B_{\varepsilon}^{*}, z_{0})$ on $\pi_{1}(\mathcal{F}_{z_{0}}, \infty_{z_0})^{(p')}$ as long as the intersection behavior of the $\alpha_{i}$'s over $\Spec(R)$ is the same as the intersection behavior of the $a_{i}$'s over $B_{\varepsilon}$.

\begin{thm} \label{thm comparison}

In the above situation, we have the following.

a) The actions $\rho_{\tp}^{(p')}$ and $\rho_{\alg}^{(p')}$ factor through $\pi_{1}(B_{\varepsilon}^{*}, z_{0})^{(p')}$ and $G_{K}^{(p')}$ respectively.

b) If $e_{i, j} = E_{i, j}$ for $1 \leq i < j \leq d$, then there exist isomorphisms $\widehat{\pi}_{1}(B_{\varepsilon}^{*}, z_{0})^{(p')} \stackrel{\sim}{\to}G_{K}^{(p')}$ and 
$\widehat\pi_{1}(\mathcal{F}_{z_{0}}, \infty_{z_0})^{(p')} \stackrel{\sim}{\to} \pi_{1}^{\et}(\proj_{\bar{K}}^{1} \smallsetminus 
\{\alpha_{1}, ... , \alpha_{d}\}, P)^{(p')}$ inducing an isomorphism of the outer actions $\bar{\rho}_{\tp}^{(p')}$ and $\bar{\rho}_{\alg}^{(p')}$.  Moreover, this lifts to an isomorphism of the actions $\rho_{\tp}^{(p')}$ and $\rho_{\alg}^{(p')}$ provided that $\alpha_i \in R \cup \{\infty\}$ for $1 \leq i \leq d$.

\end{thm}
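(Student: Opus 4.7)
The plan is to compare the two outer monodromy actions through their shared combinatorial skeleton: the dual tree of the semistable model of the configuration $(\alpha_1, \ldots, \alpha_d)$. Algebraically, iterated blowups of $\proj_R^1$ along its closed fiber separate the $R$-sections $\alpha_i$ and yield a regular semistable model whose special fiber is a tree of $\proj^1$'s over the algebraically closed residue field, with combinatorics determined entirely by the pairwise intersection multiplicities $E_{i, j}$. The analogous construction over $\cc[[x]]$ resolves the singular fiber of $\mathcal F \to B_\varepsilon$ at $0$ and yields a tree determined by the $e_{i, j}$. The hypothesis $e_{i, j} = E_{i, j}$ canonically identifies the two trees, and the proof amounts to checking that both outer monodromy actions are determined in the same way by this shared combinatorial data.

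For part (a), on the algebraic side I would use that, because $R$ is strictly Henselian with algebraically closed residue field, the prime-to-$p$ quotient $G_K^{(p')}$ coincides with the tame quotient $G_K^{\tame} \cong \widehat{\zz}^{(p')}(1)$. Any prime-to-$p$ finite \'etale cover of $\proj_{\bar K}^1 \smallsetminus \{\alpha_1, \ldots, \alpha_d\}$ extends, via Abhyankar's lemma applied to the semistable model, to a cover that is tame along the closed fiber and the sections, so the $G_K$-action on such covers factors through $G_K^{\tame} = G_K^{(p')}$. On the topological side, the monodromy generator is a product of Dehn twists, and a Dehn twist along a simple closed curve $\gamma$ acts on any finite prime-to-$p$ quotient $H$ of $\pi_1(\mathcal F_{z_0})$ with order dividing the order of the image of $\gamma$ in $H$, hence with prime-to-$p$ order; this forces the $\widehat{\zz}$-action on $\widehat\pi_1(\mathcal F_{z_0})^{(p')}$ to factor through $\widehat{\zz}^{(p')} = \pi_1(B_\varepsilon^*, z_0)^{(p')}$.

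For part (b), both $\pi_1^{\et}(\proj_{\bar K}^1 \smallsetminus \{\alpha_1, \ldots, \alpha_d\}, P)^{(p')}$ and $\widehat\pi_1(\mathcal F_{z_0}, \infty_{z_0})^{(p')}$ are identified with the pro-prime-to-$p$ completion of the free group $\langle \sigma_1, \ldots, \sigma_d \mid \sigma_1 \cdots \sigma_d = 1 \rangle$, where $\sigma_i$ is an inertia generator (algebraically) or a small loop (topologically) at the $i$-th puncture; the algebraic side of this identification is Grothendieck's prime-to-$p$ specialization theorem applied to the semistable model, and the topological side is standard. To match the outer monodromy actions, I would show that a fixed generator of $\widehat{\zz}^{(p')}$ acts, on each side, as the same product of ``combinatorial Dehn twists'' indexed by the nodes of the shared tree: topologically this is the classical description of monodromy in a degenerating family of punctured spheres, and algebraically it is the tame analog of Picard--Lefschetz, expressing the local tame monodromy in terms of inertia generators at each node. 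Since both recipes depend only on the tree combinatorics, the outer actions coincide.

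Under the additional hypothesis $\alpha_i \in R \cup \{\infty\}$, one may take $P = \infty$, which is a $G_K$-fixed $R$-section and corresponds to the topologically monodromy-fixed section $z \mapsto \infty_z$; with these compatible basepoints genuinely fixed (not merely up to inner automorphism), the outer-action isomorphism lifts to an action isomorphism. I expect the main obstacle to be the algebraic half of the Dehn-twist comparison: rigorously establishing that the local tame monodromy generator acts by the expected product of node-wise inertia conjugations with respect to the chosen inertia generators. I would tackle this by induction on the number of blowups needed to separate the sections, handling the base case of two transversally colliding sections ($E_{i, j} = 1$) by direct Kummer-theoretic computation and then propagating across the tree.
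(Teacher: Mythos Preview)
Your approach is genuinely different from the paper's, and the difference is instructive.

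The paper does \emph{not} compute the algebraic monodromy directly in terms of the tree of the semistable model. Instead it performs a chain of reductions: first to $P=\infty$ and $\alpha_i\in R$; then (using Abhyankar's lemma and the Five Lemma on the split short exact sequences) to $R$ the completion of a countable strictly Henselian DVR; then, via a two-dimensional bridge ring $S=R[[x]][\tfrac{1}{x}]$ equipped with a surjection $S\twoheadrightarrow K$ (sending $x\mapsto\pi$) and an inclusion $S\hookrightarrow\cc((x))$, to $R=\cc[[x]]$. Over $S$ one constructs polynomials $\tilde\alpha_i\in R[x]$ with the prescribed intersection behavior, checks the divisor $\sum(\tilde\alpha_i)$ is \'etale over $\Spec(S)$, and then invokes the specialization theorems of SGA1 (Exp.\ X and XIII) to transport the prime-to-$p$ short exact sequence from $K$ to $\cc((x))$. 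Finally, over $\cc((x))$ one compares the \'etale and topological pictures by embedding the family into an algebraic family over a punctured affine line and using Riemann's Existence Theorem together with one more specialization isomorphism. Part (a) on the algebraic side is read off from Kisin; on the topological side it is obtained as a byproduct of the $\cc((x))$ comparison, not from the Dehn-twist description.

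Your route---match both monodromies against a common ``product of node-wise Dehn twists'' indexed by the tree---is conceptually cleaner and would yield explicit formulas directly, but its entire weight rests on the step you flag as the main obstacle: an explicit tame Picard--Lefschetz formula showing that a generator of $G_K^{(p')}$ acts on $\pi_1^{\et}(\proj_{\bar K}^1\smallsetminus\{\alpha_i\})^{(p')}$ as the expected product of inertia conjugations, compatibly with a fixed choice of inertia generators. This is not a standard citable result in the generality you need; it is essentially the content that the paper's two-dimensional-ring maneuver is designed to circumvent. Your proposed induction on blowups with a Kummer-theoretic base case is plausible, but carrying it out rigorously (in particular, keeping track of a coherent system of inertia generators through the blowups so that the formulas glue) is a substantial project in itself, comparable in difficulty to the paper's \S3. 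One further point: your last paragraph underestimates the lift from outer to genuine action. Having $G_K$-fixed and monodromy-fixed basepoints on the two sides is necessary but not sufficient; you must also verify that your chosen isomorphism of geometric fundamental groups (built from the tree) is compatible with those sections, which in the paper is handled by tracking the infinity section through every specialization and base-change step.
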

\begin{rmk} \label{rmk applications}
In \cite{yelton2017boundedness}, the second author derives a particular application of the above theorem towards understanding the $\ell$-adic Galois action action associated to the Jacobians of hyperelliptic curves over number fields, through viewing such a curve as a cover of the projective line ramified at certain points and localizing at primes $p \neq \ell, 2$ at which the curve has bad reduction.

\end{rmk}

\begin{rmk}\rm
Much of this paper was inspired by ideas in \cite{flon2004ramification}, which gave a treatment, up to some imprecise language, of a problem that is very similar in spirit.  In this paper, the authors have decided to follow a different approach, similar to that of Oda in \cite{oda1995note}, which the authors believe to be more direct and more instructive to the reader. Namely, we reduce to the equicharacteristic case via a $2$-dimensional ring (see \S\ref{sec3.3}) and then relate the topological action to the $R=\mathbb{C}[[x]]$ case directly.

\end{rmk}

\subsection{Outline of the paper} \label{sec1.2}

In \S\ref{sec2}, we will use purely topological constructions and arguments to describe the action $\rho_{\tp}$ in terms of Dehn twists (Theorem \ref{thm main topological}).   In \S\ref{sec3}, we will construct a specific basis of $\pi_{1}(\mathcal{F}_{z_{0}}, \infty_{z_0})$ for which we can describe the action of $\rho_{\tp}$ explicitly.  Then we give some basic examples of how one can combine Theorem \ref{thm comparison} with the explicit formulas given in \S\ref{sec3.1} to give explicit generators and relations of the prime-to-$p$ \'etale fundamental groups of $\mathbb{P}^1_K$ minus $K$-rational branch points.  Our formulas will also be used to prove (Corollary \ref{prop field of def}) that the degree over $K$ of the field of moduli of any prime-to-$p$ $G$-Galois cover of $\proj_{\bar K}^1\smallsetminus\{\alpha_1,...,\alpha_d\}$ (together with an action of $G$) divides the exponent of $G$ modulo its center.  Finally, the whole of \S\ref{sec4} is dedicated to a proof of Theorem \ref{thm comparison}.

\subsection{Acknowledgments} \label{sec1.3}

The first author would like to thank David Harbater for introducing him to this topic and for many conversations related to this topic during his PhD studies at the University of Pennsylvania; Brian Conrad for several conversations on the topic during the author's time as a Szeg\"{o} assistant professor at Stanford; Andrew Obus for his help with a subtle issue in one of the arguments in the proof of Proposition \ref{prop isomorphic to family over cc((x))}; and Daniel Litt for an insightful correspondence following the release of the first version of the arXiv submission. The second author would like to thank Fabrizio Andreatta for many enlightening discussions and for providing suggestions and references relating to several crucial ideas. Finally, both authors are grateful to the referee for providing helpful suggestions which have improved the exposition in this paper.

\section{The topological monodromy action} \label{sec2}

We begin this section with a brief overview of some topological preliminaries which we will need below.  We will then construct some simple loops whose images lie in $(\proj_{\cc}^{1})^{\an} \smallsetminus \{a_{1}(z_{0}), ... , a_{d}(z_{0})\}$ and present and prove Theorem \ref{thm main topological}, which describes the monodromy action $\rho_{\tp}$ in terms of Dehn twists associated to these loops.

\subsection{Configuration spaces, mapping class groups, and Dehn twists} \label{sec2.1}

For any integer $d \geq 2$, we define $Y_{d}$ to be the complex manifold $\cc^{d} \smallsetminus \bigcup_{1 \leq i < j \leq d} \Delta_{i, j}$, where each $\Delta_{i, j}$ is the ``weak diagonal" subspace of $\cc^{d}$ consisting of points $(z_{1}, ... , z_{d})$ with $z_{i} = z_{j}$.  We endow $Y_{d} \subset \cc^{d}$ with the subspace topology and call it the \emph{ordered configuration space} of $d$-element subsets of $\cc$.  Each point $(z_{1}, ... , z_{d}) \in Y_{d}$ may be identified with the ordered $d$-element subset $\{z_{1}, ... , z_{d}\} \subset \cc$.

Given an integer $d \geq 2$ and any basepoint $T_{0} \in Y_{d}$, we write $\mathcal{Y}_{d}$ for the group of self-homeomorphisms of the complex plane which fix the subset $T_{0} \subset \cc$ pointwise (note that the structure of $\mathcal{Y}_{d}$ as an abstract topological group does not depend on our choice of $T_{0}$).  Additionally, let $\mathcal{Y}_{0}$ denote the group of all self-homeomorphisms of the complex plane.  We endow these groups of homeomorphisms with the compact-open topology and consider them as topological groups.  Note that we have an obvious inclusion $\iota : \mathcal{Y}_{d} \hookrightarrow \mathcal{Y}_{0}$.

For $d \geq 2$, the \emph{pure mapping class group (of the plane)} $\pi_{0}\mathcal{Y}_{d}$ is the quotient of the topological group $\mathcal{Y}_{d}$ modulo the path component of the identity $\id \in \mathcal{Y}_{d}$.  Note that each self-homeomorphism $f \in \mathcal{Y}_{d}$ induces an automorphism of $\pi_{1}((\proj_{\cc}^{1})^{\an} \smallsetminus T_{0}, \infty)$.  Since any self-homeomorphism in $\mathcal{Y}_{d}$ can be extended uniquely to a self-homeomorphism of $(\proj_{\cc}^{1})^{\an}$ which fixes the elements of $T_{0}$ as well as $\infty$, there is an obvious action of $\pi_{0}\mathcal{Y}_{d}$ on $\pi_{1}((\proj_{\cc}^{1})^{\an} \smallsetminus T_{0}, \infty)$, which we denote by $\varphi: \pi_{0}\mathcal{Y}_{d} \to \Aut(\pi_{1}((\proj_{\cc}^{1})^{\an} \smallsetminus T_{0}, \infty))$.

Let $\gamma : [0, 1] \to (\aff_{\cc}^{1})^{\an} \smallsetminus T_{0}$ be a simple loop, and write $\im(\gamma)$ for its image in $(\aff_{\cc}^{1})^{\an} \smallsetminus T_{0}$.  The simple loop $\im(\gamma)$ is homeomorphic to the unit circle $S^{1} := \{z \in \cc \ | \ |z| = 1\}$, and this homeomorphism can be chosen such that the image of $\gamma(t)$ is $e^{2 \pi \sqrt{-1}t} \in S^{1}$ for $t \in [0, 1]$.  Take a small tubular neighborhood around $\im(\gamma)$ on $(\aff_{\cc}^{1})^{\an} \smallsetminus T_{0}$, which is homeomorphic to $S^{1} \times [-\xi, \xi]$ for some small $\xi > 0$.  By abuse of notation, we identify this neighborhood with $S^{1} \times [-\xi, \xi]$; the outer edge is $S^{1} \times \{\xi\}$, and the inner edge is $S^{1} \times \{-\xi\}$.  We define the \emph{Dehn twist} $D_{\gamma} : (\aff_{\cc}^{1})^{\an} \to (\aff_{\cc}^{1})^{\an}$ to be the self-homeomorphism which acts as the identity on $(\aff_{\cc}^{1})^{\an} \smallsetminus S^{1} \times [-\xi, \xi]$ and which takes $(e^{2 \pi \sqrt{-1}t}, s) \in S^{1} \times [-\xi, \xi]$ to $(e^{2 \pi \sqrt{-1}(t + 1/2 - s/(2\xi))}, s)$.  We may visualize $D_{\gamma}$ as a self-homeomorphism of $(\aff_{\cc}^{1})^{\an}$ that keeps the outer edge of the tubular neighborhood fixed while twisting the inner edge one full rotation counterclockwise.  Clearly, $D_{\gamma} \in \mathcal{Y}_{d}$; we also denote its path component in $\pi_{0}\mathcal{Y}_{d}$ by $D_{\gamma}$.

Let $\epsilon : \mathcal{Y}_{0} \to Y_{d}$ be the \emph{evaluation map} defined by sending any homeomorphism $f \in \mathcal{Y}_{0}$ to $(f(z_{1}), ... , f(z_{d})) \in Y_{d}$.  Note that the inverse image of $T_{0} = (z_{1}, ... , z_{d}) \in Y_{d}$ under $\epsilon$ is $\mathcal{Y}_{d}$.  It follows easily from \cite[Theorem 4.1]{birman1974braids} that this map is a fiber bundle.  Therefore, $\epsilon$ induces a long exact sequence of fundamental groups 
\begin{equation}\label{homotopy long exact sequence}
 ... \stackrel{\iota_{*}}{\to} \pi_{1}(\mathcal{Y}_{0}, \id) \stackrel{\epsilon_{*}}{\to} \pi_{1}(Y_{d}, T_{0}) \stackrel{\partial}{\to} \pi_{0}\mathcal{Y}_{d} \stackrel{\iota_{*}}{\to} \pi_{0}\mathcal{Y}_{0} \stackrel{\epsilon_{*}}{\to} \pi_{0}Y_{d} = 1.
\end{equation}
We observe that by \cite[Theorem 4.4]{birman1974braids}, the mapping class group $\pi_{0}\mathcal{Y}_{0}$ is trivial, and therefore the map $\partial : \pi_{1}(Y_{d}, T_{0}) \to \pi_{0}\mathcal{Y}_{d}$ is surjective. This map can be described explicitly as follows.  Let $\gamma : [0, 1] \to Y_{d}$ be a loop, with $\gamma(0) = \gamma(1) = T_{0}$, and let $[\gamma] \in \pi_{1}(Y_{d}, T_{0})$ be the corresponding equivalence class.  Then $\gamma$ lifts to a path $\tilde{\gamma} : [0, 1] \to \mathcal{Y}_{0}$ with $\tilde{\gamma}(0) = \id$, via the fiber bundle $\epsilon : \mathcal{Y}_{0} \to Y_{d}$.  Note that the self-homeomorphism $\tilde{\gamma}(1) : (\aff_{\cc}^{1})^{\an} \to (\aff_{\cc}^{1})^{\an}$ fixes the points $z_{1}, ... , z_{d}$ since $\gamma(1) = (z_{1}, ... , z_{d})$, so $\tilde{\gamma}(1) \in \mathcal{Y}_{d}$.  Then $\partial([\gamma]) \in \pi_{0}\mathcal{Y}_{d}$ is the path component of $\tilde{\gamma}(1)$.

Note that we have a surjective map $Y_{d + 1} \to Y_{d}$ defined by ``forgetting" the $(d + 1)^{\oper{st}}$ point, under which the inverse image of any point $T \in Y_{d}$ is $(\aff_{\cc}^{1})^{\an} \smallsetminus T$.  According to \cite[Theorem 3]{fadell1962configuration}, this map is a fiber bundle.  It is also shown in \cite{fadell1962configuration} that $Y_{d}$ (and $Y_{d + 1}$) have trivial $j^{\oper{th}}$ homotopy groups for $j \geq 2$.

Now define the family $\bar{Y}_{d + 1} \to Y_{d}$ to be the family $Y_{d + 1} \to Y_{d}$ ``with the infinity section added"; that is, $\bar{Y}_{d + 1} \to Y_{d}$ is the subfamily of the isotrivial family $(\proj_{\cc}^{1})^{\an} \times Y_{d} \to Y_{d}$ such that over each $T \in Y_{d}$, the fiber $(\bar{Y}_{d + 1})_{T}$ is $(\proj_{\cc}^{1})^{\an} \smallsetminus T$.  For each $T \in Y_{d}$, write $\infty_{T}$ for the point in $\bar{Y}_{d + 1}$ coming from the point at infinity in the fiber $(\bar{Y}_{d + 1})_{T} = (\proj_{\cc}^{1})^{\an} \smallsetminus T$.  It is clear that $\bar{Y}_{d + 1} \to Y_{d}$ is also a fiber bundle.  The associated long exact homotopy sequences for the fiber bundles $Y_{d + 1} \to Y_d$ and $\bar{Y}_{d + 1} \to Y_{d}$ both truncate. Moreover, the fiber bundle $Y_{d + 1} \to Y_d$ admits a section $s : Y_{d} \to Y_{d + 1}$ whose $(d + 1)$th coordinate is given by $T \mapsto 1 + \max_{z \in T} |z|$. Considered as a map $Y_d \to \bar{Y}_{d + 1}$, $s$ deforms to the ``infinity section" $\underline{\infty}$ whose $(d + 1)$th coordinate is given by $T \mapsto \infty_{T}$ of the fiber bundle $\bar{Y}_{d + 1} \to Y_d$.  We therefore have the following commutative diagram, where the top and bottom rows are the short exact sequences associated to these two fiber bundles, split via the sections $s$ and $\underline{\infty}$.

\begin{equation} \label{eq S4} \xymatrix{ 1 \ar[r] & \pi_1((\mathbb{A}^1_{\mathbb{C}})^{\an} \smallsetminus T_0, s(T_0)) \ar[r] \ar@{->>}[d] & \pi_1(Y_{d+1}, s(T_0)) \ar[r] \ar@{->>}[d] & \pi_1(Y_{d}, T_0) \ar@/_1pc/[l]_{s_*} \ar[r] \ar@{=}[d] & 1 
\\ 1 \ar[r] &  \pi_1((\mathbb{P}^1_{\mathbb{C}})^{\an} \smallsetminus T_0, \infty) \ar[r] & \pi_1(\bar{Y}_{d + 1},\underline{\infty}(T_0)) \ar[r] & \pi_1(Y_{d}, T_0) \ar@/_1pc/[l] \ar@/_1pc/[l]_{\underline{\infty}_*} \ar[r] & 1} \end{equation}

The splitting of the short exact sequence on the bottom row of (\ref{eq S4}) gives rise to a monodromy action of $\pi_{1}(Y_{d}, T_{0})$ on $\pi_{1}((\proj_{\cc}^{1})^{\an} \smallsetminus T_{0}, \infty)$, which we denote by $\rho : \pi_{1}(Y_{d}, T_{0}) \to \Aut(\pi_{1}((\proj_{\cc}^{1})^{\an} \smallsetminus T_{0}, \infty))$.



\begin{prop} \label{prop factors through MCG}

For any basepoint $T_0 \in Y_{d}$, the representation $\rho : \pi_{1}(Y_{d}, T_0) \to \Aut(\pi_{1}((\proj_{\cc}^{1})^{\an} \smallsetminus T_0, \infty))$ factors through $\pi_{0}\mathcal{Y}_{d}$ as the composition $\varphi \circ \partial$.

\end{prop}

\begin{proof}

To simplify notation, we will assume that $T_0 = (1, ... , d)$. Let $\beta : [0, 1] \to Y_{d}$ be a loop based at $(1, ... , d)$, which, via the infinity section, induces a loop on $\bar{Y}_{d + 1}$ which we also denote by $\beta$. Let $\gamma : [0, 1] \to (\proj_{\cc}^{1})^{\an} \smallsetminus \{1, ... , d\} \hookrightarrow \bar{Y}_{d + 1}$ be a loop based at $\infty$.  We want to show that the concatenation of loops $\beta^{-1}\gamma\beta$ is a representative of the element $\varphi(\partial(\beta))([\gamma]) \in \pi_{1}((\proj_{\cc}^{1})^{\an} \smallsetminus \{1, ... , d\}, \infty)$.  It is clear from the above discussion and the diagram in (\ref{eq S4}) that we may deform $\beta$ and $\gamma$ so that $\beta$ is a loop $[0, 1] \to Y_{d} \stackrel{s}{\hookrightarrow} \bar{Y}_{d + 1}$ and $\gamma$ is a loop on $(\aff_{\cc}^{1})^{\an} \smallsetminus \{1, ... , d\}$ based at $d + 1$. Then it suffices to show that the concatenation of loops given by $\beta^{-1}\gamma\beta$ is homotopic to the loop obtained by acting on $\gamma$ by a representative of $\partial([\beta])$ in $\mathcal{Y}_{d}$.

From the discussion above, we see that the loops $\beta, \gamma : [0, 1] \to Y_{d + 1}$ lift via the fiber bundle $\epsilon : \mathcal{Y}_{0} \to Y_{d + 1}$ to paths $\tilde{\beta}, \tilde{\gamma} : [0, 1] \to \mathcal{Y}_{0}$ starting at $\id \in \mathcal{Y}_{0}$ and ending at representatives of $\partial([\beta]), \partial([\gamma]) \in \pi_{0}\mathcal{Y}_{d}$ respectively.  Therefore, $\beta^{-1}\gamma\beta$ lifts to the path $[0, 1] \to \mathcal{Y}_{0}$ given by $t \mapsto \tilde{\beta}(t)^{-1}\tilde{\gamma}(t)\tilde{\beta}(t)$.  We claim that this path is homotopic to the path $\tilde{\delta} : t \mapsto \tilde{\beta}(1)^{-1}\tilde{\gamma}(t)\tilde{\beta}(1)$.  Indeed, there is a homotopy $[0, 1] \times [0, 1] \to \mathcal{Y}_{0}$ of paths starting at $\id$ and ending at $\tilde{\beta}(1)^{-1}\tilde{\gamma}(1)\tilde{\beta}(1)$ given by $(s, t) \mapsto \tilde{\beta}(t + s(1 - t))^{-1}\tilde{\gamma}(t)\tilde{\beta}(t + s(1 - t))$ which deforms $\tilde{\beta}^{-1}\tilde{\gamma}\tilde{\beta}$ to $\tilde{\delta}$.  Now since $\tilde{\beta}(1)$ fixes each of the points $1, ... , d + 1$ while $\tilde{\gamma}(t)$ fixes each of the points $1, ... , d$ for all $t$, it is easy to check that $\tilde{\delta}(t)$ fixes each point $1, ... , d$ and takes $d + 1$ to $(\tilde{\beta}(1) \circ \gamma)(t)$ for all $t$. It follows that $\tilde{\delta} : [0, 1] \to \mathcal{Y}_{0}$ is the (unique) lifting of the loop obtained by acting on $\gamma$ by $\tilde{\beta}(1) \in \mathcal{Y}_{d}$, which is a representative of $\partial([\beta])$.  This implies the desired homotopy of loops on $\bar{Y}_{d + 1}$.

\end{proof}

\subsection{Description of the topological monodromy action} \label{sec2.2}

In Theorem \ref{thm main topological}  we will describe the topological monodromy action in terms of loops on $(\proj_{\cc}^{1})^{\an} \smallsetminus \{a_{1}(z_{0}), ... , a_{d}(z_{0})\}$ that depend on the intersection behavior of the $a_{i}$'s. These loops can be taken to be circles for a suitable choice of $z_{0}$. Since knowledge of the monodromy action does not depend on our choice of $z_0$, we will allow ourselves to restrict our attention to such a strategic choice.

We begin by introducing some notation. Let $F_n:\mathbb{C}[[x]] \rightarrow \mathbb{C}[x]$ be the map given by $\sum_{i = 0}^{\infty} c_ix^i \mapsto \sum_{i=0}^{n-1} c_ix^i$. Let $\mathcal{I}$ be the set of all pairs $(I, n)$ where $I \subseteq \{1, ... , d\}$ is a subset of cardinality $d \geq 2$ and $n \geq 1$ is an integer such that $e_{i, j} \geq n$ for all $i, j \in I$ and such that $I$ is maximal among subsets with this property. We remark that if  $(I,n)$ is in $\mathcal{I}$, then for all $i,j\in I$ we have that $F_n(a_i)=F_n(a_j)$. For such a pair $(I,n)$, we denote this polynomial by $b_{I,n}$, and let $w_{I, n} = b_{I,n}(z_{0})$.

Let $\eta, r > 0$, and let $\gamma_{I, n}$ be the loop given by $t \mapsto w_{I, n} + r^{n - 1} \eta e^{2\pi \sqrt{-1} t}$.  We write $\im(\gamma_{I, n})$ for its image in $(\aff_{\cc}^{1})^{\an}$, and write $B_{I, n} := \{z \in (\aff_{\cc}^{1})^{\an} \ | \ |z - w_{I, n}| < r^{n - 1} \eta\}$ for the simply connected component of $(\aff_{\cc}^{1})^{\an} \smallsetminus \im(\gamma_{I, n})$. Note that $\gamma_{I,n}$ depends also on $\eta$ and $r$, but we suppress this in the notation.

\begin{prop} \label{prop loops around clusters of points}

Assume all of the above notation.  For every $\eta > 0$ sufficiently small, there exists an $r>0$ sufficiently small, so that if 
$\frac{r}{2}<|z_{0}|<r$ then the following hold:

a) For distinct pairs $(I, n), (I', n') \in \mathcal{I}$, the images $\im(\gamma_{I, n})$ and $\im(\gamma_{I', n'})$ do not intersect; and 

b) For any pair $(I, n) \in \mathcal{I}$, we have $a_{i}(z_{0}) \in B_{I, n}$ for $i \in I$ and $a_{j}(z_{0}) \notin B_{I, n} \cup \im(\gamma_{I, n})$ for $j \notin I$.

\end{prop}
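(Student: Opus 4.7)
The plan is to exploit the fact that $e_{i,j} = v_x(a_i - a_j)$ satisfies the ultrametric inequality $e_{i,k} \geq \min(e_{i,j}, e_{j,k})$. For each $n$, this makes the relation ``$e_{i,j} \geq n$'' an equivalence relation on $\{1, \ldots, d\}$, so the subsets $I$ appearing in $\mathcal{I}$ are precisely the equivalence classes at level $n$. Consequently, for any two pairs $(I,n), (I',n') \in \mathcal{I}$, either $I \cap I' = \emptyset$ or one of $I, I'$ contains the other. This forest-of-clusters structure reduces (a) to two cases (disjoint versus nested) and organizes the analysis of (b).

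The central ingredient is a dictionary between $x$-adic valuations and metric estimates. From the definitions one has $v_x(a_i - b_{I,n}) \geq n$ for $i \in I$; for $j \notin I$, maximality produces $i \in I$ with $e_{i,j} < n$, and then the ultrametric inequality gives $v_x(a_j - b_{I,n}) = e_{i,j}$. In the nested case $I' \subseteq I$ with $n \leq n'$, picking any $i \in I'$ yields $v_x(b_{I,n} - b_{I',n'}) \geq n$; in the disjoint case, $v_x(b_{I,n} - b_{I',n'}) = e_{i,i'} < \min(n, n')$ for any $i \in I, i' \in I'$. After a harmless rescaling $x \mapsto \lambda x$ with $\lambda$ small (which preserves every $e_{i,j}$ and leaves the topology of $\mathcal{F}$ unchanged, up to reparametrization of the base disk), one may assume each $a_i$ converges on a disk of radius greater than $1$ with uniformly bounded Taylor coefficients $|a_{i,k}| \leq M$. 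Under this assumption, for $|z_0| < r$ with $r$ small, any power series $h$ with $v_x(h) \geq N$ satisfies $|h(z_0)| \leq 2M|z_0|^N$, and if $h$ has leading term $cx^N$ with $c \neq 0$, then $|h(z_0)| \geq (|c|/2)|z_0|^N$.

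Plugging these into the four required estimates: for (b), $|a_i(z_0) - w_{I,n}| \leq 2M|z_0|^n \leq 2Mr \cdot r^{n-1} < r^{n-1}\eta$ once $r < \eta/(2M)$, and $|a_j(z_0) - w_{I,n}| \geq (|c_{i,j}|/2^{e_{i,j}+1}) r^{e_{i,j}} \geq (|c_{i,j}|/2^{e_{i,j}+1}) r^{n-1}$ (using $e_{i,j} \leq n-1$ and $r<1$) exceeds $r^{n-1}\eta$ whenever $\eta < |c_{i,j}|/2^{e_{i,j}+1}$. For (a), the disjoint case yields $|w_{I,n} - w_{I',n'}| \geq (|c|/2^{e_{i,i'}+1}) r^{\min(n,n')-1}$, which dominates $r^{n-1}\eta + r^{n'-1}\eta \leq 2r^{\min(n,n')-1}\eta$ once $\eta < |c|/2^{e_{i,i'}+2}$; the nested case (including $I = I'$ with $n < n'$) gives $|w_{I,n} - w_{I',n'}| + r^{n'-1}\eta \leq r(2M+\eta) r^{n-1} < r^{n-1}\eta$ once $r < \eta/(2M+\eta)$, placing $\gamma_{I',n'}$ strictly inside $B_{I,n}$ and hence disjoint from $\gamma_{I,n}$.

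The main obstacle is coordinating the two-layer quantifier ($\eta$ first, then $r$) with the fact that $\mathcal{I}$ is infinite --- it contains all singleton-cluster pairs $(\{i\}, n)$ for $n$ arbitrarily large. What saves the argument is that the uniform coefficient bound $|a_{i,k}| \leq M$ (arranged by the rescaling) makes every constant in the above estimates independent of $n$: both families of constraints on $\eta$ reduce to finitely many inequalities in the fixed data $\{e_{i,j}, c_{i,j}\}$, and the constraints on $r$ (including $r < \eta/(2M)$, $r < \eta/(2M+\eta)$, $r < 1/2$, and $r$ small enough for the leading-term lower bound to apply) consolidate into a single upper bound depending only on $\eta$ and $M$.
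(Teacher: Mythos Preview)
Your overall strategy---classifying pairs in $\mathcal{I}$ as nested or disjoint via the ultrametric property of the $e_{i,j}$'s, then translating $x$-adic valuations into metric upper and lower bounds on $|h(z_0)|$---is exactly the paper's approach, and the chain of inequalities you write down for (a) and (b) is correct in form and leads to the right finite list of constraints on $\eta$ and then on $r$.

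There is, however, a genuine gap at the rescaling step.  The substitution $x \mapsto \lambda x$ does \emph{not} reduce the proposition to the case of convergence radius larger than~$1$.  Under this rescaling the centers become $b_{I,n}(\lambda \tilde z_0)$, so to match centers with the original picture one must take $z_0 = \lambda\tilde z_0$ and hence $r = \lambda\tilde r$; but then the loop radii $r^{n-1}\eta = \lambda^{n-1}\tilde r^{n-1}\eta$ pick up an $n$-dependent factor, and there is no single $\eta$ making them agree with $\tilde r^{n-1}\tilde\eta$ for all $n$ simultaneously.  Concretely, if for the rescaled series one has $\tilde\gamma_{I',n'} \subset \tilde B_{I,n}$ with $|w_{I',n'}-w_{I,n}|$ close to the outer radius $\tilde r^{n-1}\tilde\eta$, shrinking that outer radius by the extra factor $\lambda^{n-1}$ can push the inner circle outside, so the nested containment fails for the original series.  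The topological monodromy (Theorem~\ref{thm main topological}) is invariant under $x\mapsto\lambda x$; the precise statement of the present proposition, with its radii $r^{n-1}\eta$, is not.

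This is easy to repair once you notice that your worry about $\mathcal{I}$ being infinite is moot: the paper takes $\mathcal{I}$ to be finite (this is used verbatim in the proof of Lemma~\ref{prop clusters of points}, and the remark following Hypothesis~\ref{hypothesis decreasing intersection numbers} makes explicit that $l\geq 2$), i.e.\ only pairs with $|I|\geq 2$ are meant---singleton clusters give trivial Dehn twists anyway.  With only finitely many $(I,n)$, no uniformity in $n$ is needed: drop the rescaling and replace your bound $|h(z_0)|\leq 2M|z_0|^N$ by a per-$(I,n)$ bound (this is exactly what the paper's Lemma~\ref{prop clusters of points} does via a continuity argument), and your constraints on $\eta$ and $r$ still consolidate into a finite list.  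Apart from this, your treatment of the disjoint case is a bit more direct than the paper's, which first uses the already-proved nested case to reduce to the special situation $n=n'$ with $b_{I,n}-b_{I',n}=cx^{n-1}$; both routes are valid.
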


In order to prove this proposition, we require the following lemma.

\begin{lemma} \label{prop clusters of points}

For every $\eta > 0$, there exists an $r > 0$ sufficiently small such that for every $(I,n)\in \mathcal{I}$ and for every $i \in I$ and $z\in B_{r}$ we have 
$$|a_{i}(z) - b_{I, n}(z)| < |z|^{n - 1} \eta.$$

\end{lemma}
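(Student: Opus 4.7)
The plan is to factor out the expected vanishing at $z = 0$ and then exploit compactness to uniformly bound a finite family of analytic functions. Concretely, for each $(I, n) \in \mathcal{I}$ and each $i \in I$, the remark immediately preceding the lemma gives $b_{I, n} = F_{n}(a_{i})$, so the power series $a_{i} - b_{I, n} \in \mathbb{C}[[x]]$ agrees with $a_i$ in all coefficients of $x^0, x^1, \ldots, x^{n-1}$ and therefore has $x$-adic valuation at least $n$. I would therefore write
\[ a_{i}(z) - b_{I, n}(z) \;=\; z^{n} h_{I, n, i}(z), \]
where $h_{I, n, i} \in \mathbb{C}[[x]]$ is a power series with the same radius of convergence as $a_i$; in particular it is analytic, and hence bounded, on a neighborhood of the closed disk $\bar{B}_{\varepsilon}$ by Hypothesis \ref{hypothesis varepsilon}.

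Next I would observe that $\mathcal{I}$ (more precisely, its subset corresponding to the non-trivial clusters of interest, $|I| \geq 2$) is finite: for $(I, n) \in \mathcal{I}$ with $|I| \geq 2$, maximality forces $n \leq \min_{i \neq j \in I} e_{i, j}$, which is finite since the $a_{i}$'s are distinct power series, and there are only finitely many subsets $I \subseteq \{1, \ldots, d\}$. It follows that $\{h_{I, n, i}\}_{(I, n) \in \mathcal{I},\, i \in I}$ is a finite family of functions, each continuous on $\bar{B}_{\varepsilon}$. Setting
\[ M \;:=\; \max_{(I, n),\, i \in I} \;\sup_{z \in \bar{B}_{\varepsilon}} |h_{I, n, i}(z)| \]
then yields a finite constant depending only on the data $a_1, \ldots, a_d$ and $\varepsilon$.

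To conclude, given $\eta > 0$, I would take $r := \min\bigl(\varepsilon,\, \eta/(M + 1)\bigr)$. For any $z \in B_{r}$ and any $(I, n) \in \mathcal{I}$, $i \in I$, the chain of inequalities
\[ |a_{i}(z) - b_{I, n}(z)| \;=\; |z|^{n}\, |h_{I, n, i}(z)| \;\leq\; |z|^{n} M \;=\; |z|^{n-1} \cdot (|z| M) \;<\; |z|^{n-1} \eta \]
finishes the proof, with the last step using $|z| < r \leq \eta/(M+1) < \eta/M$.

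The entire argument is elementary once one separates out the $z^n$ factor; the only point that needs care is the finiteness of the relevant index set, which is why one works with the (finitely many) genuine clusters $|I| \geq 2$ where the polynomial $b_{I, n}$ captures nontrivial coincidence among the $a_i$'s. I do not foresee any serious obstacle beyond that bookkeeping.
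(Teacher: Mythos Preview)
Your proof is correct and follows essentially the same approach as the paper's: both factor $a_i - b_{I,n}$ as $z^n$ times an analytic function (the paper calls it $a_{i,n}$, you call it $h_{I,n,i}$), invoke finiteness of $\mathcal{I}$, and then bound $|z|\cdot|h_{I,n,i}(z)|$ by $\eta$ for $|z|$ small. The only cosmetic difference is that the paper phrases the last step as continuity of $z \mapsto z\,a_{i,n}(z)$ at $0$, whereas you take an explicit uniform bound $M$ on the closed disk and solve for $r$; your remark isolating the finiteness of $\mathcal{I}$ to the subsets with $|I|\geq 2$ is exactly the point the paper glosses over when it asserts ``$\mathcal{I}$ is finite.''
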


\begin{proof}

Since $\mathcal{I}$ is finite, it suffices to prove the statement for a particular $(I, n)$. Let $a_{i,n}=x^{-n}(a_{i} 
- b_{I, n})$, and note that it converges wherever $a_{i}$ does.  The continuity of the map $\cc \to \cc^{\#I}$ that takes $z \mapsto (z\cdot a_{i,n}(z))_{i \in I}$ implies that given $\eta > 0$, there is a $r> 0$ such that $|z\cdot a_{i,n}(z)| < \eta$ for $i \in I$ and any $z \in B_{r}$.  Thus, we have $|a_{i}(z) - b_{I, n}(z)| = |z^{n}a_{i,n}(z)| = |z|^{n - 1}|z\cdot a_{i,n}(z)| < |z|^{n - 1} \eta$ for $i \in I$.

\end{proof}

We are now ready to prove Proposition \ref{prop loops around clusters of points}.

\begin{proof} (of Proposition \ref{prop loops around clusters of points})

Choose distinct pairs $(I, n), (I', n') \in \mathcal{I}$, and assume without loss of generality that $I$ is not strictly contained in $I'$.  It then follows from obvious properties of the $x$-adic valuation that either $I \supseteq I'$ or $I \cap I' = \varnothing$.

First assume that $I \supseteq I'$. Then we may assume without loss of generality that $n < n'$ (which is automatically the case if $I\supsetneq I'$).  Let $z$ satisfy $|z-w_{I',n'}| = \eta r^{n'-1}$. Then we have 
$$|z-w_{I,n}| \leq |z-w_{I',n'}|+|w_{I,n}-w_{I',n'}| = \eta r^{n'-1} + |z_0|^n |z_0^{-n}(w_{I,n}-w_{I',n'})|$$
\begin{equation} <\eta r^{n'-1}+r^n|z_0^{-n}(w_{I,n}-w_{I',n'})|\leq (\eta+|z_0^{-n}(w_{I,n}-w_{I',n'})|)r^n, \end{equation}
where the last inequality holds if $r$ is chosen to be less than $1$. Since $x^n$ divides $b_{I,n}-b_{I',n'}$, it follows that we may choose a sufficiently small $r$, depending on $\eta$, but independent of the choice of $z_0$, so that $(\eta + |z_0^{-n}(w_{I,n}-w_{I',n'})|)r<\eta$. Thus $|z-w_{I,n}|<\eta r^{n-1}$.

Now assume that $I\cap I'=\varnothing$. Given what we have proven above, we may assume without loss of generality that $n = n'$ and that $x^{n-1}$ divides $b_{I,n}-b_{I',n}$. Since $b_{I,n}$ and $b_{I',n}$ both have degree at most $n-1$, this implies that $b_{I,n}-b_{I',n} = c x^{n-1}$ for some nonzero constant $c$.  For any choice of $\eta < 2^{-n}|c|$, we therefore have $|w_{I,n}-w_{I',n}| = |c| |z_0|^{n - 1} > \eta|z_0|^{n-1}2^{n}$. Since we have assumed that $\frac{r}{2}<|z_0|$, it follows that $|w_{I,n}-w_{I',n}|>\eta(\frac{r}{2})^{n-1}2^{n}=2r^{n-1}\eta$. This proves part (a).

It follows immediately from Lemma \ref{prop clusters of points} that we may choose $r$ small enough so that for every $(I, n) \in \mathcal{I}$ and $i\in I$, the point $a_{i}(z_{0})$ will be in $B_{I,n}$. Now choose $j \notin I$; to prove part (b), it suffices to show that $a_{j}(z_{0}) \notin B_{I, n} \cup \im(\gamma_{I, n})$. The maximality of $I$ implies that $F_n(a_{j}) \neq b_{I, n}$, and therefore, $x_{n}$ does not divide $a_j-b_{I,n}$.  It follows that there is a constant $c' > 0$ such that $|a_{j}(z) - b_{I, n}(z)| \geq c'|z|^{n - 1}$ for $z \in B_{r}^{*}$ if $r$ is chosen to be small enough,  independently of $\eta$.  Assume that we have chosen $\eta$ so that $\eta < 2^{-(n - 1)}c'$.  Then we have 
\begin{equation} |a_{j}(z_{0}) - w_{I, n}| = |a_{j}(z_{0}) - b_{I, n}(z_{0})| \geq |z_{0}|^{n - 1}  c' \geq 2^{-(n - 1)}r^{n - 1} c' > \eta r^{n - 1}. \end{equation}
It follows that $a_{j}(z_{0}) \notin B_{I, n} \cup \im(\gamma_{I, n})$, and part (b) is proved.

\end{proof}

Each $\gamma_{I,n}$ defined above induces a induces a Dehn twist which we denote by $D_{I, n} \in \pi_{0}\mathcal{Y}_{n}$. The proposition above implies that for every $\eta>0$ small enough there exists an $r>0$ small enough for which the the $D_{I, n}$'s commute.

For ease of notation, given any $t \in \rr$, we write $e(t)$ to mean $e^{2\pi \sqrt{-1}t}$.  Let $\delta \in \pi_{1}(B_{\varepsilon}^{*}, z_{0})$ be the homotopy equivalence class of the loop given by $t \mapsto e(t)z_{0}$ for $t \in [0, 1]$.  Clearly, $\delta$ is a generator of $\pi_{1}(B_{\varepsilon}^{*}, z_{0}) \cong \zz$.  Therefore, in order to determine the monodromy action  $\rho_{\tp}$, it suffices to know how $\delta$ acts on $\pi_{1}(\mathcal{F}_{z_{0}}, \infty)$.

We are finally ready to state our main topological result.

\begin{thm} \label{thm main topological}

For every $\eta > 0$ sufficiently small, there exists an $r > 0$ sufficiently small so that if $\frac{r}{2} < |z_0| < r$, then the generator $\delta \in \pi_{1}(B_{\varepsilon}^{*}, z_{0})$ acts via $\rho_{\tp}$ on $\pi_{1}(\mathcal{F}_{z_{0}}, \infty_{z_0})$ in the same way that the product of Dehn twists $\prod_{(I, d) \in \mathcal{I}} D_{I, n}$ does; in other words, $\rho_{\tp}(\delta) = \varphi(\prod_{(I, d) \in \mathcal{I}} D_{I, n})$.

\end{thm}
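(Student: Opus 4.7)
The family $\mathcal{F} \to B_\varepsilon^*$ is the pullback of the universal family $\bar{Y}_{d+1} \to Y_d$ along the classifying map $\phi : B_\varepsilon^* \to Y_d$ sending $z \mapsto (a_1(z), \ldots, a_d(z))$, well-defined by Hypothesis \ref{hypothesis varepsilon}, and the infinity sections are compatible. Thus $\rho_{\tp}(\delta) = \rho(\phi_*(\delta)) = \varphi(\partial(\phi_*(\delta)))$, where $\partial : \pi_1(Y_d, T_0) \to \pi_0 \mathcal{Y}_d$ is the connecting map from the long exact sequence (\ref{homotopy long exact sequence}) and $\rho = \varphi \circ \partial$ is the factorization of the universal monodromy through the mapping class group. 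The theorem therefore reduces to proving the identity
\begin{equation*}
\partial(\phi_*(\delta)) = \prod_{(I, n) \in \mathcal{I}} D_{I, n} \quad \text{in } \pi_0 \mathcal{Y}_d,
\end{equation*}
the product being well-defined by the commutativity of the Dehn twists noted just after Proposition \ref{prop loops around clusters of points}.

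To verify this, I construct an explicit path $\tilde{\gamma} : [0, 1] \to \mathcal{Y}_0$ with $\tilde{\gamma}(0) = \id$ that lifts the loop $\gamma(t) := (a_1(z_0 e(t)), \ldots, a_d(z_0 e(t)))$. For each $(I, n) \in \mathcal{I}$, let $\Phi_t^{(I, n)}$ be the one-parameter family of self-homeomorphisms of $\mathbb{C}$ which inside $B_{I, n}$ performs the rigid rotation by angle $2 \pi t$ around $w_{I, n}$, which tapers to the identity in a narrow collar about $\partial B_{I, n}$ in the style of a Dehn-twist isotopy --- normalized so that $\Phi_1^{(I, n)} = D_{I, n}$, since a full $2\pi$ rotation of the interior acts as the identity on points while the collar retains the non-trivial twist --- and which is the identity outside. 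Proposition \ref{prop loops around clusters of points}(a) guarantees that these supports are pairwise nested or disjoint, so $\tilde{\gamma}(t) := \Phi_t^{(I_1, n_1)} \circ \Phi_t^{(I_2, n_2)} \circ \cdots$ taken in deepest-first order is a continuous path in $\mathcal{Y}_0$, and commutativity yields $[\tilde{\gamma}(1)] = \prod_{(I, n)} D_{I, n}$ in $\pi_0 \mathcal{Y}_d$. The key verification is that $\tilde{\gamma}(t)(a_i(z_0)) = a_i(z_0 e(t))$ for every $i$ and $t$: expanding $a_i(z_0 e(t)) = \sum_k c_{i, k} z_0^k e(kt)$ and applying the nested rotations to $a_i(z_0) = \sum_k c_{i, k} z_0^k$, each monomial $c_{i, k} z_0^k$ picks up exactly one factor of $e(t)$ from each $\Phi_t^{(I, n)}$ with $n \leq k$ and $i \in I$, yielding the required total $e(kt)$. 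The singleton pairs $(\{i\}, n) \in \mathcal{I}$ must also be included in this accounting; they contribute trivial Dehn twists and hence do not alter the product at $t = 1$, but are essential for tracking the motion at intermediate $t$.

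The main technical obstacle is that the singleton chain $\{(\{i\}, n)\}_n$ is a priori infinite, with balls shrinking to $a_i(z_0)$, and so cannot literally all be composed. This is handled by truncating each $a_i$ at a sufficiently high order, which reduces $\tilde{\gamma}$ to a finite composition while preserving $\mathcal{I}$ and the no-collision condition; the truncated loop is then manifestly homotopic to $\gamma$ in $Y_d$ via a linear interpolation of power series which, by the estimates of Lemma \ref{prop clusters of points}, stays inside $Y_d$. Combining the finite lift with this homotopy yields $\partial(\phi_*(\delta)) = [\tilde{\gamma}(1)] = \prod_{(I, n) \in \mathcal{I}} D_{I, n}$, as desired.
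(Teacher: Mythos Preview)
Your approach is correct and takes a genuinely different route from the paper's. The paper argues by induction on levels: it invokes Lemma~\ref{lemma key topological} to write the monodromy loop as the concatenation of $\lambda_{J,1}$ with a residual loop of the same shape in which the $a_i$ for $i\in J$ are replaced by $a_i' = a_i(0) + z_0\,x^{-1}(a_i - a_i(0))$, so that the $J$-cluster's intersection data drop by one level; iterating (with Lemma~\ref{ugly} controlling the dependence on $z_0$) eventually empties $\mathcal{I}$, and each $\lambda_{I,n}$ is identified with $D_{I,n}$ via Lemma~\ref{lemma braids mapping to Dehn twists}. You instead build a single explicit lift $\tilde\gamma$ to $\mathcal{Y}_0$ as a nested composition of rotation isotopies and read off $\partial([\gamma])$ from $\tilde\gamma(1)$. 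This is more direct and makes the ``one twist per level'' mechanism visible in a single formula; the cost is the truncation/interpolation maneuver to deal with the infinite singleton tail, plus a verification you leave implicit: at each stage of the composition the image of $a_i(z_0)$ must lie in the \emph{rigid-rotation} region of the next $\Phi_t^{(I,n)}$ (not in a collar) and outside the supports of the $\Phi_t^{(I',n')}$ with $i\notin I'$. That requires bounding $\sum_{k\ge m}|c_{i,k}|\,|z_0|^k$ rather than $\bigl|\sum_{k\ge m}c_{i,k}z_0^k\bigr|$, which is slightly stronger than Lemma~\ref{prop clusters of points} but follows by the same argument once the $a_i$ are truncated. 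Both proofs ultimately rest on the factorization $\rho=\varphi\circ\partial$ of Lemma~\ref{prop factors through MCG}.
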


\begin{rmk}\rm  \,\, {\, \, \mbox{ }} \label{rmk intersection behavior}

(a) The above theorem implies in particular that in this situation, the monodromy action depends only on the intersection behavior of the power series over the disk $B_{\varepsilon}$.

(b) The above theorem can be viewed as a generalization of a result of Oda given by \cite[Main Lemma 1.7]{oda1995note}. Oda's lemma states that under certain technical hypotheses regarding their degenerations, a generator of the monodromy action associated to a family of \it compact \rm genus-$g$  Riemann surfaces over the punctured disk acts as a certain product of Dehn twists. Indeed, for every $g\geq 0$ and compact genus-$g$ Riemann surface there exists a Zariski open subset that can be viewed as degree-$2$ covering space of the sphere minus $2g + 2$ points.

\end{rmk}

\subsection{Proof of Theorem \ref{thm main topological}} \label{sec2.3}

In order to prove Theorem \ref{thm main topological}, we first need several lemmata.  To simplify notation in the lemma below, we make the following extra assumption regarding the power series $a_{i}$, which always holds after a suitable reordering.

\begin{hyp} \label{hypothesis decreasing intersection numbers}

For every $m \in \{1, ... , d\}$, the function $\{m + 1, ... , d\} \to \zz_{\geq 0}$ given by $i \mapsto e_{m, i}$ is (weakly) monotonically decreasing.

\end{hyp}

Note that this assumption implies that the subsets $I \subseteq \{1, ... , d\}$ such that $(I, n) \in \mathcal{I}$ for some $n \geq 1$ are always subintervals $\{m, ... , m + l - 1\}$ with $1 \leq m \leq d$ and $l \geq 2$.

For $I=  \{m, ... , m + l - 1\}$ with $(I,n)\in\mathcal{I}$, let $\lambda_{I,n}$ be the loop in $Y_d$ given by: $$(((a_{i}(z_{0}))_{1 \leq i \leq m - 1}, (w_{I,n} + e(t) (a_i(z_0)-w_{I,n}))_{m \leq i \leq m + l - 1}, ((a_{i}(z_{0}))_{m + l \leq i \leq d}))_{0\leq t\leq 1})$$

\begin{lemma} \label{lemma key topological}

Let $J = \{m, ... , m + l - 1\}$ be a subinterval such that $(J, 1) \in \mathcal{I}$. For each $i \in J$, let $c_{i}(x) = x^{-1} (a_{i}(x) - w) \in \cc[[x]]$, where $w= a_{m}(0) = ... = a_{m + l - 1}(0)$. Let $r' > 0$ be small enough so that for all $z \in  B_{r'}$ and $i = 1, ..., d$ we have $|a_{i}(z) - a_{i}(0)| < \mu := \frac{1}{2}\min_{i \notin J} |a_{i}(0) - w|$. Then the loop in $Y_d$ given by 
$$(a_{1}(e(t) z_{0}), ... , a_{d}(e(t) z_{0}))_{0\leq t\leq 1}$$
is homotopic to the concatenation of $\lambda_{J,1}$ with the loop in $Y_d$ given by
$$((a_{i}(e(t)z_{0}))_{1 \leq i \leq m - 1}, (w + z_{0}c_{i}(e(t)z_{0}))_{m \leq i \leq m + l - 1}, (a_{i}(e(t)z_{0}))_{m + l \leq i \leq d})_{0\leq t\leq 1}.$$
 
\end{lemma}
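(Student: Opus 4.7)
The plan is to construct a common continuous source $B_r^* \times B_{r'}^*$ (for suitable $r > 0$) from which all three loops pull back via a single map into $Y_d$, and then deduce the homotopy from the commutativity of $\pi_1$ of a punctured polydisc. Writing $a_i(x) = w + x c_i(x)$ for $i \in J$, I would define the continuous map
\[ \tilde{A}: B_r^* \times B_{r'}^* \longrightarrow Y_d, \qquad (z_1, z_2) \longmapsto (\tilde{A}_1, \ldots, \tilde{A}_d), \]
where $\tilde{A}_i = a_i(z_2)$ for $i \notin J$ and $\tilde{A}_i = w + z_1 c_i(z_2)$ for $i \in J$. Here $z_0$ is assumed to satisfy $|z_0| < \min(r, r')$.

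The first and main obstacle is to verify that $\tilde{A}$ lands in $Y_d$, i.e., that its $d$ coordinates are pairwise distinct. For $i, j \notin J$ (with $i \neq j$), this is Hypothesis \ref{hypothesis varepsilon}. For $i, j \in J$ (with $i \neq j$), the difference $\tilde{A}_i - \tilde{A}_j = z_1(c_i(z_2) - c_j(z_2)) = z_1 z_2^{-1}(a_i(z_2) - a_j(z_2))$ is nonzero since $z_1, z_2 \in \cc^{\times}$ and $a_i(z_2) \neq a_j(z_2)$ by Hypothesis \ref{hypothesis varepsilon}. The delicate case is $i \notin J$ versus $j \in J$: the hypothesis on $r'$ gives $|a_i(z_2) - w| > \mu$, while $|z_1 c_j(z_2)| < r M$, where $M := \sup_{z \in \overline{B_{r'}},\ k \in J} |c_k(z)|$ is finite (since each $c_k$ converges on $\overline{B_\varepsilon} \supseteq \overline{B_{r'}}$ by Hypothesis \ref{hypothesis varepsilon}). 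Choosing $r < \mu/M$ then forces $\tilde{A}_i \neq \tilde{A}_j$.

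Once $\tilde{A}$ is shown to be well-defined, a direct substitution, using that $w_{J,1} = b_{J,1}(z_0) = w$ (since $(J,1) \in \mathcal{I}$ forces the $a_i$ with $i \in J$ to share the constant term $w$) and $a_i(z_0) - w = z_0 c_i(z_0)$, shows that the three loops in $B_r^* \times B_{r'}^*$ based at $(z_0, z_0)$ given by
\[ \alpha_1(t) := (e(t)z_0, z_0), \qquad \alpha_2(t) := (z_0, e(t)z_0), \qquad \alpha_{12}(t) := (e(t)z_0, e(t)z_0) \]
satisfy $\tilde{A} \circ \alpha_1 = \lambda_{J,1}$, $\tilde{A} \circ \alpha_2$ equals the second loop in the desired concatenation, and $\tilde{A} \circ \alpha_{12}$ equals the original loop $(a_1(e(t)z_0), \ldots, a_d(e(t)z_0))$.

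Finally, $\pi_1(B_r^* \times B_{r'}^*, (z_0, z_0)) \cong \zz^2$ is abelian and generated by $[\alpha_1]$ and $[\alpha_2]$, whence $[\alpha_{12}] = [\alpha_1][\alpha_2]$; that is, $\alpha_{12}$ is homotopic to the concatenation of $\alpha_1$ and $\alpha_2$. Pushing this homotopy forward through the continuous map $\tilde{A}$ yields the desired homotopy in $Y_d$, completing the proof.
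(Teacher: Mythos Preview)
Your approach is correct and takes a more conceptual route than the paper's. The paper builds the homotopy $H:[0,1]^2\to Y_d$ by hand as a three-piece formula (first a reparametrized copy of $\lambda_{J,1}$, then the second loop, with a blended middle piece) and then checks that $H$ lands in $Y_d$ by showing $|H(s,t)_j-w|<\mu$ for $j\in J$ and $|H(s,t)_i-w|>\mu$ for $i\notin J$. Your argument absorbs all of this into the single observation that the three loops are $\tilde A\circ\alpha_1$, $\tilde A\circ\alpha_2$, and $\tilde A\circ\alpha_{12}$ for a map $\tilde A$ out of a space with abelian $\pi_1$; the paper's explicit $H$ is nothing but (a reparametrization of) the standard homotopy witnessing $[\alpha_{12}]=[\alpha_1][\alpha_2]$, pushed forward through $\tilde A$. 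Your version makes the structure transparent and avoids the bookkeeping of the piecewise formula.

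One small point: taking the domain to be $B_r^*\times B_{r'}^*$ forces you to impose the extra hypothesis $|z_0|<r$ with $r<\mu/M$, which is not part of the lemma (only $|z_0|<r'$ is implicit, and the paper's proof needs no more). This is easily avoided. Replace the punctured bidisk by the torus $T=\{(z_1,z_2):|z_1|=|z_2|=|z_0|\}$, which still has $\pi_1\cong\zz^2$ generated by $[\alpha_1],[\alpha_2]$ and still contains all three loops. On $T$ one has, for $j\in J$,
\[
|z_1 c_j(z_2)|=|z_2 c_j(z_2)|=|a_j(z_2)-a_j(0)|<\mu
\]
directly from the hypothesis on $r'$, so the delicate cross-case $i\notin J$, $j\in J$ is settled without any auxiliary $r$. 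With this adjustment your proof matches the paper's in strength while remaining cleaner.
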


\begin{proof}

We define the continuous map $H : [0, 1] \times [0, 1] \to \cc^{d}$ as follows.

$$H(s, t) = \begin{cases}
\Big((a_{i}(z_{0}))_{1 \leq i \leq m - 1}, \\
\hspace{2em} (w + e(\tfrac{t}{1 - s/2})z_{0}c_{i}(z_{0}))_{m \leq i \leq m + l - 1}, ((a_{i}(z_{0}))_{m + l \leq i \leq d}\Big) & 0 \leq t \leq s/2 \\
\Big((a_{i}(e(\tfrac{t - s/2}{1 - s/2})z_{0}))_{1 \leq i \leq m - 1}, \\
\hspace{2em} (w + e(\tfrac{t}{1 - s/2})z_{0}c_{i}(e(\tfrac{t - s/2}{1 - s/2})z_{0}))_{m \leq i \leq m + l - 1}, (a_{i}(e(\tfrac{t - s/2}{1 - s/2})z_{0}))_{m + l \leq i \leq d}\Big) & s/2 \leq t \leq 1 - s/2 \\
\Big((a_{i}(e(\tfrac{t - s/2}{1 - s/2})z_{0}))_{1 \leq i \leq m - 1}, \\
\hspace{2em} (w + z_{0}c_{i}(e(\tfrac{t - s/2}{1 - s/2})z_{0}))_{m \leq i \leq m + l - 1}, (a_{i}(e(\tfrac{t - s/2}{1 - s/2})z_{0}))_{m + l \leq i \leq d}\Big) & 1 - s/2 \leq t \leq 1
\end{cases}$$

It is straightforward to check from the formulas given above that $H$ is continuous; that $H(0, t)$ is the original loop; and 
that $H(1, t)$ is a concatenation of the loops described in the statement.  It therefore suffices to show that $H(s, t) \in 
Y_{d}$ for all $(s, t) \in [0, 1] \times [0, 1]$. 

It is easy to verify from Hypothesis \ref{hypothesis varepsilon} that the $c_{i}$'s take distinct values on $B_{r'}^{*}$ for $i \in J$.  It follows that the $i$th and $j$th coordinates of $H(s, t)$ are different for all $(s, t)$ if $i, j \in J$ or if $i, j \notin J$.  Now it clearly suffices to show that the $i$th coordinate of $H(s, t)$, which we denote by $H(s, t)_{i}$, satisfies $|H(s, t)_{i} - w| < \mu$ if and only if $i \in J$.  Choose some $j \in J$ and some $i \notin J$, and let $I$ be the unique subinterval containing $i$ such that $(I, 1) \in \mathcal{I}$.  Now it can be directly verified from the formulas that $|H(s, t)_{j} - w| = |a_{j}(e(u)z_{0}) - w| < \mu$, and that meanwhile, $|H(s, t)_{i} - a_{i}(0)| = |a_{i}(e(u)z_{0}) - a_{i}(0)| < \mu$, where $u = 1$ if $0 \leq t \leq s/2$ and $u = \frac{t - s/2}{1 - s/2}$ otherwise.  Therefore, we have $|H(s, t)_{j} - w| < \mu$, and since $|a_{i}(0) - w| \geq 2\mu$, we have $|H(s,t)_i(0) - w| \geq \mu$.  Since $j \in J$ and $i \notin J$ were chosen arbitrarily, we get the desired statement.
  
\end{proof}

\begin{lemma} \label{lemma braids mapping to Dehn twists}

For choices of $\eta$, $r$ and $z_0$ as in Proposition \ref{prop loops around clusters of points}, the loop $\lambda_{I,n}$ defined above acts on 
$\pi_{1}((\aff_{\cc}^{1})^{\an} \smallsetminus T_{0}, \infty)$ as $\varphi(D_{I, n})$.
\end{lemma}

\begin{proof}

By Proposition \ref{prop factors through MCG}, it is enough to show that $\partial([\lambda_{I, n}]) = D_{I, n} \in \pi_{0}\mathcal{Y}_{d}$.  Choose some real number $\xi > 0$ small enough so that $\xi < r^{n - 1}\eta$ and $\{z \in \cc \ | \ r^{n - 1}\eta - \xi < |z - w| < r^{n - 1}\eta + \xi\} \subset (\aff_{\cc}^{1})^{\an}$ does not contain any of the points $a_{i}(z_{0})$.  Let $\tilde{\lambda}_{I, n} : [0, 1] \times \mathcal{Y}_{0} \to (\aff_{\cc}^{1})^{\an} \smallsetminus T_{0}$ be the homotopy such that for all $t \in [0, 1]$, $\tilde{\lambda}_{I, n}(t) : (\aff_{\cc}^{1})^{\an} \to (\aff_{\cc}^{1})^{\an}$ acts on $\{z \in \cc \ | \ |z - w_{I, n}| > r^{n - 1}\eta + \xi\}$ as the identity, on $\{z \in \cc \ | \ |z - w_{I, n}| < r^{n - 1}\eta - \xi\}$ as $z \mapsto e(t)z$, and on $\{z \in \cc \ | \ r^{n - 1}\eta - \xi < |z - w_{I, n}| < r^{n - 1}\eta + \xi\}$ by fixing the outer rim and twisting the inner rim counterclockwise by an angle of $2\pi t$.  Then clearly $\tilde{\lambda}_{I, n}(0)$ is the identity $\id \in \mathcal{Y}_{0}$, and $\tilde{\lambda}_{I, n}(t)$ agrees with $\lambda_{I, n}(t)$ on the points $a_{1}(z_{0}), ... , a_{d}(z_{0})$ for all $t \in [0, 1]$.  It follows from the construction of $\partial$ given above that $\partial([\lambda_{I, n}])$ is represented by $\tilde{\lambda}_{I, n}(1)$ in $\pi_{0}\mathcal{Y}_{d}$.  Since by definition, the Dehn twist $D_{I, n} \in \pi_{0}\mathcal{Y}_{d}$ is also represented by $\tilde{\lambda}_{I, n}(1)$, we are done.

\end{proof}

\begin{lemma} \label{ugly}

Let $p_1,...,p_d$ be power series which converge on $B_{\nu}$, for some $\nu>0$.  Define $\mathcal{I}$ for these power series as above, and assume that $(J,1)$ is in $\mathcal{I}$. Choose $z_0 \in B_{\nu}^*$, and let $p'_i = p_i$ for $i\not \in J$ and $p'_i=p_i(0)+z_0x^{-1}(p_i-p_i(0))$ for $i\in J$. Then there exists a $\nu' > 0$ which is independent of $z_0$, such that for all $z \in B_{\nu'}^*$ and $i = 1,...,d$ we have $|p'_{i}(z) - p'_{i}(0)| <  \frac{1}{2} \min_{p'_j(0) \neq p'_i(0)} |p'_{i}(0) - p'_j(0)|$.

\end{lemma}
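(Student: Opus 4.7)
The plan is to split the analysis by whether indices lie in $J$ or its complement and to exploit that for $i \in J$ the modification $p'_i$ carries a factor of $z_0$ in the right place, so that in the delicate case both sides of the claimed inequality scale with $|z_0|$ and the factor cancels. Let $w$ denote the common value of $p_i(0)$ for $i \in J$ (well-defined since $(J,1) \in \mathcal{I}$), and for $i \in J$ put $q_i(x) := x^{-1}(p_i(x) - w) \in \cc[[x]]$, which still converges on $B_\nu$. Then $p'_i(x) = w + z_0\,q_i(x)$, so
\[
p'_i(0) = w + z_0\,q_i(0), \qquad p'_i(z) - p'_i(0) = z_0\,\bigl(q_i(z) - q_i(0)\bigr),
\]
whereas $p'_i = p_i$ for $i \notin J$.

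Next I would classify the pairs $(i,j)$ contributing to the minimum on the right-hand side into three cases: (a) $i, j \notin J$ with $p_i(0) \neq p_j(0)$, giving a fixed positive separation; (b) exactly one of $i, j$ lies in $J$, say $i \in J$ and $j \notin J$ --- by maximality of $(J, 1) \in \mathcal{I}$ we have $w \neq p_j(0)$, and after possibly shrinking $\nu$ so that $|z_0\,q_i(0)| < \tfrac{1}{2}|w - p_j(0)|$ uniformly for all such $i, j$ and all $z_0 \in B_\nu^*$, the distance is at least $\tfrac{1}{2}|w - p_j(0)|$; (c) $i, j \in J$ with $q_i(0) \neq q_j(0)$, in which case the distance equals exactly $|z_0|\cdot|q_i(0) - q_j(0)|$. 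Let $M > 0$ be a lower bound on the distances in (a) and (b), and let $c > 0$ be the minimum of $|q_i(0) - q_j(0)|$ over pairs in (c) (interpreted as $+\infty$ if no such pair exists); both constants depend only on the $p_i$'s, not on $z_0$.

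Finally I would choose $\nu' \in (0, \nu]$, using uniform continuity of the finitely many power series $p_k$ and $q_k$ at the origin, so that for all $z \in B_{\nu'}$ and all $k$ we have $|p_k(z) - p_k(0)| < M/4$ and $|q_k(z) - q_k(0)| < \min\bigl(c/2,\; M/(4\nu)\bigr)$; these constraints involve only $z$ and fixed constants, not $z_0$. For $i \notin J$ the inequality is immediate from $|p_i(z) - p_i(0)| < M/4 < \tfrac{1}{2}M$. For $i \in J$ the left-hand side equals $|z_0|\cdot|q_i(z) - q_i(0)|$: compared against a term from case (c) the factor $|z_0|$ cancels and it suffices to have $|q_i(z) - q_i(0)| < c/2$, while compared against a term from case (b) we bound by $|z_0|\cdot M/(4\nu) < M/4 < \tfrac{1}{2}M$ using $|z_0| < \nu$. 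The main obstacle, and the whole point of the lemma, is the $|z_0|$-dependence appearing in case (c); it is resolved precisely because the left-hand side carries a matching factor of $|z_0|$ that cancels the one on the right.
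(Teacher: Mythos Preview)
Your proof is correct and follows essentially the same case analysis as the paper: both split according to whether $i$ and $j$ lie in $J$, with the decisive observation being that when $i,j\in J$ the factor $|z_0|$ appears on both sides of the desired inequality and cancels. Your step ``after possibly shrinking $\nu$'' in case~(b) slightly oversteps the literal hypotheses, but the paper's own proof makes the same tacit move (phrased there as a choice of $\nu'$), and it is harmless in the intended application.
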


\begin{proof}

Fix the notation $p_i=\sum_{k=1}^{\infty}b_{i,k}x^k$ for $i=1,...,d$. Fix an index $i$, and let $j$ be such that $p'_j(0)\neq p'_i(0)$.

If $i, j \notin J$, then $p'_i = p_i$ and $p'_j = p_j$ are defined independently of $z_0$. Since $p_i(z)-p_i(0)$ has no constant term, it follows that $\nu'$ can be chosen (independently of $z_0$) to be small enough that for $z \in B_{\nu'}^*$, we have
\begin{equation} |p_i(z)-p_i(0)| < \tfrac{1}{2} |p_i(0) - p_j(0)|\neq 0. \end{equation}
\textit
If $i \not \in J$ and $j \in J$, then $|p'_i(0)-p'_j(0)|=|p_i(0)-p_j(0)-z_0b_{j,1}|$. Note that one can choose $\nu'$ to be small enough that $|p_i(0)-p_j(0)-z_0b_{j,1}|$ is arbitrarily close to $|p_i(0)-p_j(0)|\neq 0$. Since $p'_{i}(z) - p'_{i}(0)$ has no constant term, it follows that $\nu'$ can be chosen (independently of $z_0$) to be small enough so that for all $z \in B_{\nu'}^{*}$, we have 
\begin{equation} |p'_{i}(z) - p'_{i}(0)| < \tfrac12 |p'_i(0)-p'_j(0)|. \end{equation}

If $i \in J$ and $j\not \in J$, then
$|p'_i(0) - p'_j(0)| = |p_i(0) + z_0b_{i1} - p_j(0)|$, and $\nu'$ can be chosen (independently of $z_0$) so that $|p'_i(0)-p'_j(0)|$ is arbitrarily close to $|p_i(0) - p_j(0)| \neq 0$. The proof then follows as in the previous case.

If $i, j\in J$, then $p_i(0) = p_j(0)$, and therefore $|p'_i(0) - p'_j(0)| = |p_i(0) + z_0b_{i,1} - p_j(0) - z_0b_{j,1}| = |z_0||b_{i,1} - b_{j,1}|$. Meanwhile, $|p'_{i}(z) - p'_{i}(0)|=|z_0||(p_i - b_{i,0} -b_{i,1}z)/z|$. Therefore, the inequality $|p'_{i}(z) - p'_{i}(0)| < 
\tfrac12 |p'_{i}(0) - p'_j(0)|$ simplifies to 
\begin{equation} \label{eq ugly} |(p_i - b_{i,0} - b_{i,1}z)/z| < \tfrac12 |b_{i,1}-b_{j,1}| \neq 0. \end{equation}
Since $(a_i - a_{i,0} - a_{i,1}z)/z$ is a power series with no constant term, again $\nu'$ can be chosen (independently of $z_0$) so that 
(\ref{eq ugly}) holds for all $z \in B_{\nu'}^{*}$.

\end{proof}

We are now ready to finish the proof of the main theorem of this section.

\begin{proof} (of Theorem \ref{thm main topological})

If $\mathcal{I}$ is empty, then $a_{i}(0) \neq a_{j}(0)$ for $i \neq j$.  In this case, $\mathcal{F} \to B_{\varepsilon}$ is clearly a trivial fiber bundle and the action of $\delta$ is trivial.  We therefore assume that $\mathcal{I}$ is not empty, so $N := \#\mathcal{I} \geq 1$.

Let $\eta$, $r$ and $z_0$ be as in Proposition \ref{prop loops around clusters of points}. Choose an interval $J = \{m, ... , m + l - 1\} \subseteq \{1, ... , d\}$ such that $(J, 1) \in \mathcal{I}$.  Now define power series $a_{i}' \in \cc[[x]]$ by setting $a_{i}' = a_{i}$ for $i \in \{1, ... , m - 1, m + l, ... , d\}$ and $a_{i}' = w + z_{0}c_{i}$, where $w$ and $c_{i}$ are defined as in the statement of Lemma \ref{lemma key topological}.  Note that $a_{i}'(z_{0}) = a_{i}(z_{0})$ for $1 \leq i \leq d$.   Define $\mathcal{I}'$ for the power series $a_{i}'$ in the same way that $\mathcal{I}$ was defined for the power series $a_{i}$; note that $\mathcal{I}'$ has cardinality $N - 1$. For any $(I', n') \in \mathcal{I}$, let $\lambda'_{I', n'} : [0, 1] \to Y_{d}$ be the path defined with respect to the $a'_i$'s analogously to how the $\lambda_{I, n}$'s were defined with respect to the $a_i$'s. Note that for any subinterval $J' \subseteq J$, $(J',1) \in \mathcal{I'}$ if and only if $(J',2) \in \mathcal{I}$ and that we have $\lambda'_{J',1} = \lambda_{J', 2}$ in this case. The rest of the argument follows from an argument by induction on $N$. That is, we assume that the statement holds if $\#\mathcal{I} = N - 1$ and therefore that it holds for the power series $a_{i}'$. Then after choosing $r$ to be small enough, we see by using Lemmas \ref{lemma key topological} and \ref{lemma braids mapping to Dehn twists} that the generator $\delta \in \pi_1(B_{\varepsilon}^*, z_0)$ acts as $\varphi(\prod_{(I', n') \in \mathcal{I}'} D_{I', n'})$ composed with $\varphi(D_{J, 1})$. The fact that the choice of $r$ at each step is independent of $z_0$ follows immediately from Lemma \ref{ugly}.

\end{proof}

\section{Applications and examples} \label{sec3}

In this section we provide an algorithm for explicit computations of the monodromy actions that are the subject of Theorem \ref{thm comparison}. This is done by first giving an algorithm for explicit computations for the topological monodromy and then using  Theorem \ref{thm comparison} to give explicit description of the arithmetic monodromy action. In \S\ref{sec3.3}, we will use this description to prove a result on fields of moduli (Proposition \ref{prop field of def}) which we will need in \S\ref{sec4} but which is interesting in its own right.

\subsection{Explicit algorithm for computing the topological monodromy action} \label{sec3.1}

Let $a_1,...,a_d$ be power series in $\mathbb{C}[[x]]$ satisfying Hypothesis \ref{hypothesis varepsilon}. In this section we will introduce a set of generators of $\pi_1(\mathcal{F}_{z_{0}}, \infty_{z_0})$ for which we give an explicit description of the monodromy action by $\pi_1(B^*_{\varepsilon},z_0)$. Our approach is similar to the one taken in \cite[\S6]{yu351toward}.

We reprise all of the notation used in \S\ref{sec2.1} and fix $T_{0} = (a_{1}(z_{0}), ... , a_{d}(z_{0})) \in Y_{d}$. Given an integer $d \geq 2$, the \textit{full braid group on $d$ strands}, denoted $B_{d}$, is defined to be the group of homotopy classes of paths in the ordered configuration space $Y_{d}$ of the affine line which begin at the point $(1, ... , d) \in Y_{d}$ and end at a point in $Y_{d}$ given by a permutation of the ordered set $(1, ... , d)$. The \textit{pure braid group on $d$ strands}, denoted $P_{d}$, is defined to be the fundamental group $\pi_1(Y_d,(1,...,d))$; we view it as a normal subgroup of $B_{d}$ in the obvious way.

There is a well-known description of $B_{d}$ as an abstract group, given by generators $\beta_{1}, ... , \beta_{d - 1}$ (where each $\beta_{i}$ is the braid rotating the $i^{\oper{th}}$ and $(i+1)^{\oper{st}}$ points in a counterclockwise semicircular motion) and relations 
\begin{equation} \label{braid relations}
\begin{cases}
\beta_{i}\beta_{j} = \beta_{j}\beta_{i},& |i - j| \geq 2 \\
\beta_{i}\beta_{i + 1}\beta_{i} = \beta_{i + 1}\beta_{i}\beta_{i + 1}, & 1 \leq i \leq d - 2.
\end{cases}
\end{equation}

We recall the fiber bundle $Y_{d + 1} \to Y_{d}$ defined in \S\ref{sec2.1}. Assume Hypothesis \ref{hypothesis decreasing intersection numbers}, and assume that we have chosen $\eta$ and $r$ as in  Theorem \ref{thm main topological}.  We note that given any path $\beta : [0, 1] \to \bar{Y}_{d + 1}$ and any loop $\gamma$ on the fiber of $\bar{Y}_{d + 1} \to Y_d$ containing $\beta(0)$, we can deform $\gamma$ along the path $\beta$ to a loop $\gamma'$ on the fiber of $\bar{Y}_{d + 1} \to Y_d$ containing $\beta(1)$. This follows from the fact that the family $\bar{Y}_{d + 1} \to Y_d$ enjoys the homotopy lifting extension property with respect to the inclusion $\{0, 1\} \hookrightarrow [0, 1]$ (see \cite[Proposition VII.6.3]{bredon1993topology}). Fix a path in $\bar{Y}_{d + 1}$ from $\infty_{T_{0}} = (a_{1}(z_{0}), ... , a_{d}(z_{0}), \infty) \in \bar{Y}_{d + 1}$ to $(1, ... , d, d + 1)$ so that each of the loops $\gamma_{I, n}$ is taken in this way to a loop in $(\proj_{\cc}^{1})^{\an} \smallsetminus \{1, ... , d\}$ which is homotopic to a circular loop surrounding the subinterval $I = \{m, ... , m + l - 1\} \subseteq \{1, ... , d\}$.  From now on, we identify $\pi_{1}((\proj_{\cc}^{1})^{\an} \smallsetminus \{1, ... , d\}, d + 1)$ with $\pi_{1}(\mathcal{F}_{z_{0}}, \infty_{z_0})$ via this path.

We will specify a generating set for the fundamental group 
$\pi_{1}((\proj_{\cc}^{1})^{\an} \smallsetminus \{1, ... , d\}, d + 1)$ by specifying one for $\pi_{1}((\mathbb{A}_{\cc}^{1})^{\an} \smallsetminus \{1, ... , d\}, d + 1)$ as follows. For $i = 1 , ..., d$ let $x_i$ be the loop on $(\aff_{\cc}^{1})^{\an}$ based at the point $d + 1$ going above the points $i + 1, ... , d$ and wrapping counterclockwise around only the point $i$. It is clear from the top short exact sequence in (\ref{eq S4}) that we may view these loops $x_i$ as elements of $P_{d+1} \lhd B_{d+1}$. It is easy to verify that as an element of $B_{d+1}$, each generator $x_i$ can be expressed  as $(\beta_{d}\cdots \beta_{i+1})\beta_{i}^{2}(\beta_{d}\cdots\beta_{i+1})^{-1}$ by checking that this braid is obtained by first moving the point $d + 1$ leftwards over the points $i + 1, ... , d$, then wrapping it counterclockwise around the point $i$, and finally returning it to its original position by performing the inverse of the first motion.

We see from the diagram in (\ref{eq S4}) (where we put $T_0 = (1, ... , d)$) that the action of $P_d$ on $\pi_{1}((\proj_{\cc}^{1})^{\an} \smallsetminus \{1, ..., d\}, d + 1)$ induced by the splitting of the bottom short exact sequence is given by the restriction of the conjugation action of $B_{d} \subset B_{d + 1}$ on the subgroup $\pi_{1}((\aff_{\cc}^{1})^{\an} \smallsetminus \{1, ..., d\}, d + 1) \subset B_{d + 1}$ (which is normalized by $B_{d}$). The following lemma, which describes this conjugation action, follows from straightforward calculations using the braid relations given in (\ref{braid relations}).

\begin{lemma} \label{lemma rho}

The elements $x_i$, viewed as braids in $B_{d+1}$, behave under conjugation by the braids $\beta_i \in B_{d} \subset B_{d + 1}$ as follows. 
$$\begin{cases}
\beta_i^{-1}x_{i+1}\beta_i = x_{i }, & \\
\beta_i^{-1}x_i\beta_i = \, x_ix_{i+1}x_i^{-1}, & \\
\beta_i^{-1}x_{j}\beta_i = x_{j}, & j \neq i, i + 1. 
\end{cases}$$

\end{lemma}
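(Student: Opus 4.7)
The plan is a direct computation in the braid group $B_{d+1}$, using only the relations (\ref{braid relations}) together with the explicit expression $x_j = w_j \beta_j^2 w_j^{-1}$, where $w_j := \beta_d \beta_{d-1} \cdots \beta_{j+1}$. The whole argument will rest on two auxiliary facts extracted from the braid relations: first, that $\beta_i$ commutes with $\beta_k$ whenever $|i-k| \geq 2$; and second, that the braid relation $\beta_i \beta_{i+1} \beta_i = \beta_{i+1} \beta_i \beta_{i+1}$ is equivalent to the rewriting rule $\beta_i^{-1} \beta_{i+1} \beta_i = \beta_{i+1} \beta_i \beta_{i+1}^{-1}$ (and, equivalently, to $\beta_{i+1}^{-1} \beta_i \beta_{i+1} = \beta_i \beta_{i+1} \beta_i^{-1}$).

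First, to handle the case $j \neq i, i+1$: if $j > i+1$, every braid generator appearing in $w_j \beta_j^2 w_j^{-1}$ has index $\geq j+1$ or equals $\beta_j$, all of which commute with $\beta_i$, so $\beta_i^{-1} x_j \beta_i = x_j$ is immediate. If $j < i$, I would decompose $w_j = w_{i+1} \beta_{i+1} \beta_i (\beta_{i-1} \cdots \beta_{j+1})$, move $\beta_i^{-1}$ through $w_{i+1}$ by commutativity, and apply the first rewriting rule to push the stray $\beta_i^{-1}$ past the adjacent $\beta_{i+1}$. The $\beta_{i+1}^{\pm 1}$'s that this manipulation produces commute with the entire inner expression $(\beta_{i-1} \cdots \beta_{j+1}) \beta_j^2 (\beta_{i-1} \cdots \beta_{j+1})^{-1}$ (all of whose indices are at most $i-1$), so they cancel with their partners, and the $\beta_i^{\pm 1}$'s recombine to give back $x_j$.

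Next, the case $j = i+1$ reduces, after commuting $\beta_i^{\pm 1}$ past $w_{i+1}$, to showing $\beta_i^{-1} \beta_{i+1}^2 \beta_i = \beta_{i+1} \beta_i^2 \beta_{i+1}^{-1}$; this is the square of the first rewriting rule, and reabsorbing the new flanking $\beta_{i+1}$'s into $w_{i+1}$ to form $w_i = w_{i+1} \beta_{i+1}$ then yields $w_i \beta_i^2 w_i^{-1} = x_i$.

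The main obstacle, and the most computationally intricate case, is $j = i$. Here, rewriting both sides using $x_i = w_{i+1} \beta_{i+1} \beta_i^2 \beta_{i+1}^{-1} w_{i+1}^{-1}$ and $x_i x_{i+1} x_i^{-1} = w_{i+1}(\beta_{i+1} \beta_i^2 \beta_{i+1}^2 \beta_i^{-2} \beta_{i+1}^{-1}) w_{i+1}^{-1}$, and then commuting $\beta_i^{\pm 1}$ past $w_{i+1}$, reduces the lemma to the two-generator identity
\begin{equation*}
\beta_i^{-1} \beta_{i+1} \beta_i^2 \beta_{i+1}^{-1} \beta_i \;=\; \beta_{i+1} \beta_i^2 \beta_{i+1}^2 \beta_i^{-2} \beta_{i+1}^{-1}.
\end{equation*}
After conjugating both sides by $\beta_{i+1}$ and cancelling obvious common factors, I expect this to boil down to the sharper identity $\beta_{i+1}^{-1} \beta_i^2 \beta_{i+1} = \beta_i \beta_{i+1}^2 \beta_i^{-1}$, which in turn follows by writing $\beta_{i+1}^{-1} \beta_i^2 \beta_{i+1} = (\beta_{i+1}^{-1} \beta_i \beta_{i+1})^2$ and applying the second rewriting rule twice. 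Modulo this one non-trivial identity, the entire lemma is then pure bookkeeping in the braid relations.
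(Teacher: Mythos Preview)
Your proposal is correct and is precisely the ``straightforward calculation using the braid relations'' that the paper alludes to without spelling out; the paper gives no further detail, so your write-up simply makes explicit what the authors leave to the reader. One small point: in the $j=i$ case your phrase ``after conjugating both sides by $\beta_{i+1}$ and cancelling obvious common factors'' hides a nontrivial intermediate step---after conjugating by $\beta_{i+1}$ you still need to apply the rewriting rules $\beta_{i+1}^{-1}\beta_i^{-1}\beta_{i+1}=\beta_i\beta_{i+1}^{-1}\beta_i^{-1}$ and $\beta_{i+1}^{-1}\beta_i\beta_{i+1}=\beta_i\beta_{i+1}\beta_i^{-1}$ to the outer factors of the left-hand side before the cancellation becomes visible---but once that is done the reduction to $\beta_{i+1}^{-1}\beta_i^2\beta_{i+1}=\beta_i\beta_{i+1}^2\beta_i^{-1}$ goes through exactly as you say.
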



\begin{prop} \label{prop action of Dehn twists}

For any $(I, n) \in \mathcal{I}$ with $I = \{m, ... , m + l - 1\}$, the Dehn twist $D_{I, n} \in \pi_{0}\mathcal{Y}_{d}$ acts on $\pi_{1}((\proj_{\cc}^{1})^{\an} \smallsetminus \{1,...,d\}\},d+1)$ as 
$$\begin{cases}
x_{i} \mapsto (x_{m} \cdots x_{m+l-1})x_{i}(x_{m} \cdots x_{m+l-1})^{-1}, &  i \in I \\
x_{i} \mapsto x_{i}, &  i \notin I.
\end{cases}$$

\end{prop}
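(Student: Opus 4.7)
The plan is to reduce Proposition \ref{prop action of Dehn twists} to a computation inside the (pure) braid group. By Lemma \ref{prop factors through MCG} combined with Lemma \ref{lemma braids mapping to Dehn twists}, we have $\varphi(D_{I,n}) = \rho([\lambda_{I,n}])$, where $[\lambda_{I,n}] \in P_d \subset B_d \subset B_{d+1}$. Using the braid-word expression $x_i = (\beta_d \cdots \beta_{i+1})\beta_i^2(\beta_d \cdots \beta_{i+1})^{-1}$ together with Lemma \ref{lemma rho}, the action of $\rho([\lambda_{I,n}])$ on the meridian $x_i$ is realized by conjugation of $x_i$ by $[\lambda_{I,n}]$ inside $B_{d+1}$. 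So the task is to compute these conjugates.

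For $i \notin I$, I would argue topologically rather than via braid-word manipulations. The Dehn twist $D_{I,n}$ is by construction supported in a tubular neighborhood of $\im(\gamma_{I,n})$. By Proposition \ref{prop loops around clusters of points}(b), the point $a_i(z_0)$ lies in the same connected component of $(\proj_{\cc}^1)^{\an} \smallsetminus \im(\gamma_{I,n})$ as the basepoint $\infty_{z_0}$, and part (a) guarantees that the other loops $\gamma_{I',n'}$ can be kept out of the way. Thus $x_i$ may be represented by a loop lying entirely in this component and disjoint from the support of $D_{I,n}$, so its homotopy class is fixed.

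For $i \in I$, the key observation is that $[\lambda_{I,n}]$, viewed as an element of $P_d$, is the full twist of the $l$ strands indexed by $I = \{m,\ldots,m+l-1\}$, with the remaining strands held in place. For $n = 1$ this is directly visible from the definition of $\lambda_{I,1}$ as the simultaneous circular rotation of these $l$ points about their common center $w_{I,1}$. For $n \geq 2$ one reduces to this case by observing that, for fixed $I$, the curves $\gamma_{I,n}$ all separate $\{a_i(z_0)\}_{i \in I}$ from the remaining marked points and $\infty$ in an isotopic way, so the induced automorphisms of $\pi_1$ coincide. One then invokes the classical fact that the full twist is central in the braid group on the $l$ strands, and that under the Artin action on the free group $\langle x_m, \ldots, x_{m+l-1}\rangle$ it acts by conjugation by the product $x_m x_{m+1} \cdots x_{m+l-1}$; this can be verified by induction on $l$ using Lemma \ref{lemma rho} (together with the $i \notin I$ case to handle strands outside the twisting region).

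The main obstacle is the rigorous identification of $[\lambda_{I,n}]$ with the central full-twist braid on the $I$-strands, especially for $n \geq 2$. The topological picture of a rigidly rotating cluster of points makes the statement transparent, but a fully rigorous justification requires either exhibiting an explicit isotopy in $Y_d$ between $\lambda_{I,n}$ and $\lambda_{I,1}$ (for the same $I$), or carrying out a direct braid-word verification in $P_d$, which is combinatorially unpleasant. Once this identification is in hand, the remainder of the argument is a routine application of Lemma \ref{lemma rho}.
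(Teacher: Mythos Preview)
Your approach is essentially the paper's: reduce via Lemmas~\ref{prop factors through MCG} and~\ref{lemma braids mapping to Dehn twists} to computing the conjugation action of $[\lambda_{I,n}]\in P_d$ on the $x_i$, identify $[\lambda_{I,n}]$ as the full twist on the strands in $I$, and compute. The paper carries this out more directly in two respects.

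First, your ``main obstacle'' dissolves once you look at the definition of $\lambda_{I,n}$ at the beginning of \S\ref{sec2.3}: for \emph{every} $n$ it is literally the rigid simultaneous rotation $t\mapsto w_{I,n}+e(t)(a_i(z_0)-w_{I,n})$ of the points indexed by $I$ about the center $w_{I,n}$, with all other points held fixed. There is no special difficulty for $n\geq 2$; the only role of $n$ is the location of the center, and Proposition~\ref{prop loops around clusters of points} guarantees the rotating cluster stays separated from the other points. Under the chosen path from $T_0$ to $(1,\dots,d)$ (which was fixed precisely so that each $\gamma_{I,n}$ goes to a round circle around $\{m,\dots,m+l-1\}$), this rigid rotation visibly becomes the braid word $(\beta_m\cdots\beta_{m+l-2})^l$. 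The paper simply records this as a ``direct observation'' and does not split into cases on $n$.

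Second, rather than invoking a classical fact about the Artin action of the full twist, the paper just computes with Lemma~\ref{lemma rho}: one checks that $\beta_m\cdots\beta_{m+l-2}$ sends $x_m\mapsto (x_m\cdots x_{m+l-1})x_{m+l-1}(x_m\cdots x_{m+l-1})^{-1}$, cyclically shifts $x_i\mapsto x_{i-1}$ for $m<i\leq m+l-1$, and fixes $x_i$ for $i\notin I$; in particular it fixes the product $x_m\cdots x_{m+l-1}$, and iterating $l$ times gives the stated conjugation. This single computation handles both cases $i\in I$ and $i\notin I$ at once, so your separate topological argument for $i\notin I$, while correct, is not needed.
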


\begin{proof}

We first directly observe that the loop $\lambda_{I, n}\in P_d$ can be expressed in terms of the generators $\beta_{i}$ of $B_{d}$ as $(\beta_{m} \cdots \beta_{m+l-2})^{l} \in P_{d}$.  Therefore Lemma \ref{lemma braids mapping to Dehn twists} implies that $\rho((\beta_{m} \cdots \beta_{m+l-2})^{l}) = \varphi(D_{I, n})$, and so it will suffice to show, using Lemma \ref{lemma rho}, that $(\beta_{m} \cdots \beta_{m+l-2})^{l}$ acts on each $x_{i}$ in the manner described in the above statement.

It is straightforward to check that the the element $\beta_{m} \cdots \beta_{m+l-2}\in B_{d}$ acts on the generators $x_i$ by (right) conjugation as 
$$x_{m} \mapsto (x_{m} \cdots x_{m+l-2})x_{m+l-1}(x_{m} \cdots x_{m+l-2})^{-1} = (x_{m} \cdots x_{m+l-1})x_{m+l-1}(x_{m} \cdots x_{m+l-1})^{-1},$$
 as $x_i \mapsto x_{i - 1}$ for $m+1 \leq i \leq m + l - 1$, and as $x_i \mapsto x_i$ for $i \notin I$.  We easily deduce from these observations that conjugation by $\beta_{m} \cdots \beta_{m+l-2}$ fixes the product $x_{m} \cdots x_{m+l-1}$. This, combined with our formulas for how this conjugation acts on each $x_i$, shows that $(\beta_{m} \cdots \beta_{m+l-2})^{l}$ conjugates each $x_{i}$ by $(x_{m} \cdots x_{m+l-1})^{-1}$ for $i \in I$ while fixing each $x_{i}$ for $i \notin I$.

\end{proof}

\begin{rmk}\rm  \label{rmk conjugates}

In fact, it is not difficult to see from Proposition \ref{prop factors through MCG} and the formulas given in Lemma \ref{lemma rho} that more generally, each element of $\pi_{0}\mathcal{Y}_{d}$ acts on each generator $x_i$ by taking it to a conjugate of $x_i$.

\end{rmk}

\subsection{Explicit computations of prime-to-$p$ \'etale fundamental groups} \label{sec3.2}

Theorem \ref{thm comparison} (or rather, Remark \ref{rmk semidirect} below), now allows us to compute prime-to-$p$ fundamental groups of the sort $\pi_1^{\et}(\mathbb{P}^1_K \smallsetminus \{\alpha_1,...,\alpha_d\},P)^{(p')}$ with explicit generators and relations. We give two basic examples.

\begin{ex}

Given any elements $a_{1}, ... , a_{d} \in \mathbb{Z}_p^{\oper{un}} \cup \{\infty\}$ algebraic over $\qq$ that are pairwise distinct modulo $p$, we have (for any basepoint $P$) 
$$\pi_{1}^{\et}(\proj_{\qq_{p}^{\unr}}^{1} \smallsetminus \{a_1, ... , a_d\}, P)^{(p')} \cong \widehat{\langle x_1 , ..., x_d, \delta \ | \ x_1 \cdots x_d = 1, \, \{[\delta, x_i] = 1\}_{1 \leq i \leq d} \rangle}^{(p')}.$$

\end{ex}

\begin{ex}

Given any prime $p\geq 3$ and integer $m \geq 1$, we have (for any basepoint $P$) 
$$\pi_1^{\et}(\proj_{\qq_{p}^{\unr}}^{1} \smallsetminus \{0, p^m, 1, 2\}, P)^{(p')} \cong$$ 
$$\widehat{\langle x_1, x_2, x_3, x_4, \delta \ | \ x_1 \cdots x_4 = 1, \ \{\delta^{-1} x_i\delta= (x_1x_2)^{m} x_i (x_1x_2)^{-m}\}_{i = 1, 2},\ \{[\delta, x_{i}] = 1\}_{i = 3, 4} \rangle }^{(p')}.$$

\end{ex}

\subsection{The field of moduli of prime-to-$p$ covers} \label{sec3.3}

Recall that a given $G$-Galois branched cover of $\mathbb{P}^1_{\bar K}$ has a unique minimal field of definition as a Galois cover, namely its field of moduli over $K$ (as a $G$-Galois cover). Indeed, the obstruction for the field of moduli to be a field of definition is contained in $H^2(K,Z(G_K))$. (See, for example, \cite[\S1]{be}, \cite[Note below Theorem 1]{db1}, \cite[\S6.3]{db2}.) Since $K$ is a strictly Henselian field, this cohomology group vanishes.

Combining the explicit algorithm in \S\ref{sec3.1} with Theorem \ref{thm comparison} gives us the following corollary about the field of moduli of prime-to-$p$ Galois branched covers of $\mathbb{P}^1_{\bar K}$.

\begin{cor} \label{prop field of def}

Let $G$ be a finite prime-to-$p$ group, and let $Y \to \proj_{\bar{K}}^{1}$ be a $G$-Galois branched cover which is ramified only over $K$-rational points. Then the degree of its field of moduli (as a $G$-Galois cover) over $K$ divides the exponent of the quotient $G / Z(G)$.

\end{cor}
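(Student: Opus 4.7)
The plan is to translate the question into a purely topological computation using Theorem \ref{thm comparison}(b) and then exploit the explicit Dehn-twist description of $\rho_{\tp}^{(p')}$. Let $\phi:\pi_{1}^{\et}(\proj_{\bar K}^{1}\smallsetminus\{\alpha_{1},\ldots,\alpha_{d}\},P)^{(p')}\twoheadrightarrow G$ be the surjection classifying $Y$. The field of moduli of $Y$ as a $G$-Galois cover is the fixed field of the stabilizer $H\leq G_{K}$ consisting of $\tau$ with $\phi\circ\rho_{\alg}^{(p')}(\tau)=\mathrm{Inn}(g)\circ\phi$ for some $g\in G$; crucially, this condition depends only on the outer action $\bar\rho_{\alg}^{(p')}$ (changing the splitting multiplies $\rho_{\alg}^{(p')}(\tau)$ by an inner automorphism of $\pi_{1}^{\et}$, which $\phi$ sends to an inner automorphism of $G$). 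Since $\rho_{\alg}^{(p')}$ factors through $G_{K}^{(p')}\cong\widehat{\zz}^{(p')}$ with topological generator $\sigma$, setting $e:=\exp(G/Z(G))$, it is enough to prove $\sigma^{e}\in H$: this forces $[G_{K}:H]\mid e$, hence the same divisibility for $[L:K]$ where $L$ is the field of moduli.

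Via the isomorphism of outer actions in Theorem \ref{thm comparison}(b), $\sigma$ corresponds to the generator $\delta\in\widehat{\pi}_{1}(B_{\varepsilon}^{*},z_{0})^{(p')}$, so the goal becomes showing that $\phi\circ\rho_{\tp}^{(p')}(\delta)^{e}$ is $G$-equivalent to $\phi$. Theorem \ref{thm main topological} writes $\rho_{\tp}^{(p')}(\delta)=\prod_{(I,n)\in\mathcal{I}}D_{I,n}$, and by the remark following Proposition \ref{prop loops around clusters of points} the Dehn twists $D_{I,n}$ commute. The decisive step is to use this commutativity to rewrite
\[
\rho_{\tp}^{(p')}(\delta)^{e}=\prod_{(I,n)\in\mathcal{I}}D_{I,n}^{\,e},
\]
and combine it with the observation (visible from the formulas of Proposition \ref{prop action of Dehn twists}) that each $D_{I,n}$ fixes the element $c_{I}:=x_{m}x_{m+1}\cdots x_{m+l-1}$ (where $I=\{m,\ldots,m+l-1\}$). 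Consequently $D_{I,n}^{\,e}(x_{i})=c_{I}^{\,e}x_{i}c_{I}^{-e}$ for $i\in I$ and $D_{I,n}^{\,e}(x_{j})=x_{j}$ for $j\notin I$.

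The $I$'s containing a fixed index $i$ form a chain $I_{1}\subsetneq I_{2}\subsetneq\cdots$, and iterating the formulas above from innermost to outermost along this chain yields, by a short induction,
\[
\rho_{\tp}^{(p')}(\delta)^{e}(x_{i})=W_{i}^{(e)}\,x_{i}\,\bigl(W_{i}^{(e)}\bigr)^{-1},\qquad W_{i}^{(e)}=\prod_{(I,n)\in\mathcal{I},\ i\in I}c_{I}^{\,e}
\]
(with the appropriate ordering). Since $e=\exp(G/Z(G))$, each factor $\phi(c_{I})^{e}$ lies in $Z(G)$, so $\phi(W_{i}^{(e)})\in Z(G)$, and therefore $\phi\bigl(\rho_{\tp}^{(p')}(\delta)^{e}(x_{i})\bigr)=\phi(x_{i})$ for every generator $x_{i}$. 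The $x_{i}$'s generate the topological fundamental group, so $\phi\circ\rho_{\tp}^{(p')}(\delta)^{e}=\phi$ on the nose, which certainly makes the two surjections $G$-equivalent and completes the proof.

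The main obstacle I expect is the temptation to compute $\rho_{\tp}^{(p')}(\delta)^{e}$ by direct iteration: if one naively writes $\rho_{\tp}^{(p')}(\delta)(x_{i})=W_{i}x_{i}W_{i}^{-1}$ and hopes that $\rho_{\tp}^{(p')}(\delta)^{e}(x_{i})=W_{i}^{e}x_{i}W_{i}^{-e}$, the argument collapses because $\rho_{\tp}^{(p')}(\delta)$ does not fix $W_{i}$ in general (an outer Dehn twist genuinely conjugates the inner elements $c_{I}$ by its own axis), and the resulting telescoping product does not land in $Z(G)$ after applying $\phi$. The point of pulling the exponent inside the commuting product is precisely to reduce everything to the trivial case where each Dehn twist is being raised to a power while acting by conjugation by an element it fixes, and to keep all the exponents equal to $e$ so that centrality in $G/Z(G)$ can be invoked factor by factor.
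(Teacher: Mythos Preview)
Your argument is correct and follows the same route as the paper: transfer to the topological action via Theorem~\ref{thm comparison}, use Theorem~\ref{thm main topological} and Proposition~\ref{prop action of Dehn twists} to see that $\delta^{e}$ sends each $x_i$ to a conjugate by a word in the $c_I$'s, and then observe that under $\phi$ these words become central once the exponent is $e=\exp(G/Z(G))$. Your care in pulling the exponent inside the commuting product of Dehn twists (and using that each $D_{I,n}$ fixes its own $c_I$) actually tightens the paper's presentation: the paper simply asserts that $\bar\delta^{N}$ acts as $\bar x_i\mapsto\bar y_i^{-N}\bar x_i\bar y_i^{N}$, which is precisely the step you flag as unjustified without the extra structure, so your version makes explicit why the conclusion holds.
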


\begin{proof}

Let $\alpha_{1}, ... , \alpha_{d}$ be the $K$-points of $\proj_{K}^{1}$ over which the map $Y \to \proj_{\bar K}^{1}$ is ramified, so that this cover corresponds to a surjection $\phi: \pi_{1}^{\et}(\proj_{\bar{K}}^{1} \smallsetminus \{\alpha_{1}, ... , \alpha_{d}\}, P) \twoheadrightarrow G$ (where $P$ is some basepoint). Let $a_{1}, ... , a_{d} \in \cc[[x]]$ be power series satisfying Hypothesis \ref{hypothesis varepsilon} which have the same intersection behavior as the $\alpha_{i}$'s. Let $x_1, ... , x_d$ be the set of generators of $\pi_1((\proj_{\cc}^{1})^{\an}\smallsetminus T_0,\infty_{z_0})$ specified in \S\ref{sec3.1}, and let $\bar{x}_1, ... , \bar{x}_d$ be their images under the isomorphism from  $\hat\pi_{1}((\proj_{\cc}^{1})^{\an} \smallsetminus T_{0}, \infty)^{(p')}$ to $\pi_{1}^{\et}(\proj_{\bar{K}}^{1} \smallsetminus \{\alpha_{1}, ... , \alpha_{d}\}, P)^{(p')}$ guaranteed by Theorem \ref{thm comparison}.  It follows from Proposition \ref{prop action of Dehn twists} (or Remark \ref{rmk conjugates}),  Theorem \ref{thm main topological}, and Theorem \ref{thm comparison} that up to inner automorphism, a topological generator $\bar{\delta}$ of $G_{K}^{(p')}$ acts by taking each $\bar{x}_{i}$ to $\bar{y}_{i}^{-1}\bar{x}_{i}\bar{y}_{i}$ for some $\bar{y}_{i} \in \pi_{1}^{\et}(\proj_{\bar{K}}^{1} \smallsetminus \{\alpha_{1}, ... , \alpha_{d}\})^{(p')}$.  Let $N$ denote the exponent of $G / Z(G)$.  Then $\bar{\delta}^{N}$ acts up to inner automorphism as $\bar{x}_{i} \mapsto \bar{y}_{i}^{-N}\bar{x}_{i}\bar{y}_{i}^{N}$, so for $1 \leq i \leq d$, we have that the elements $\phi(\bar{x}_{i}^{\bar{\delta}^{N}})$ are uniformly conjugate to the elements $\phi(\bar{y}_{i})^{-N}\phi(\bar{x}_{i})\phi(\bar{y}_{i})^{N} = \phi(\bar{x}_{i})$, where the last equality follows from the fact that the $N^{\oper{th}}$ power of any element of $G$ lies in $Z(G)$. It follows that the field of moduli (as a $G$-Galois cover) of $Y\rightarrow \mathbb{P}^1_{\bar K}$ is contained in the fixed field of $\bar{\delta}^{N}$, which is $K(\pi^{1/N})$.

\end{proof}

\begin{rmk} \label{rmk field of def} 

With some work one can show that Th\'eor\`emes 3.2 and 3.7 in \cite{saidi}, adapted to our situation, imply that $K(t^{1/M})$ is a field of definition of the cover (together with its Galois action), where $M$ is the exponent of $G$. The corollary above is a strengthening of this result.


\end{rmk}

\section{Proof of the comparison theorem} \label{sec4}

We first observe that the arithmetic part of the statement of Theorem \ref{thm comparison}(a) is an immediate corollary of the ideas in \cite{kisin2000prime}. In fact, slightly more is true.

\begin{prop}\label{prop kisin}
Theorem \ref{thm comparison}(a) holds for $\rho_{\alg}^{(p')}$. Furthermore, if $R = \cc[[x]]$, $\alpha_i \in \cc[x]$ for $1 \leq i \leq d$, and $q$ is any prime, then 
$\rho_{\alg}^{(q')}$ factors through $G_K^{(q')}$.
\end{prop}

\begin{proof}
The statement for $\rho_{\alg}^{(p')}$ follows from \cite[Corollary 1.16]{kisin2000prime}, which is stated in terms of outer Galois actions but easily implies the statement for full Galois actions.

In order to prove the statement for $\rho_{\alg}^{(q')}$ when $K = \cc((x))$, we first observe that the statement of \cite[Corollary 5.5.8]{szamuely2009galois}, which gives a condition for the injectivity of maps of fundamental groups, works for any Galois category (it is only stated there for the Galois category of finite \'{e}tale covers).  Therefore, it suffices in this case to prove that given any finite prime-to-$p$ group $G$, every $G$-Galois cover of $\proj_{\bar{K}}^{1} \smallsetminus \{\alpha_{1}, ... , \alpha_{d}\}$ has a model defined over a prime-to-$p$ extension of $K$.  This follows from Proposition \ref{prop field of def} (or from Remark \ref{rmk field of def}); here we note that the case that we require of Proposition \ref{prop field of def} relies only on Proposition \ref{prop isomorphic to family over cc((x))}, which is independent from the rest of the paper.

\end{proof}

We will now justify a series of simplifying assumptions. We will finally assume that $R=\mathbb{C}[[x]]$, in which case $p = 0$ and therefore Theorem \ref{thm comparison}(a) holds trivially.

\subsection{Reduction to the case that $P=\infty$ and $\alpha_i \in R$} \label{sec4.1}

It is clear that the basepoint $P$ can be chosen arbitrarily, and so it suffices to prove the following.

\begin{lemma}

It suffices to prove the statement of Theorem \ref{thm comparison} in the case the $\alpha_{i} \in R$ for $1 \leq i \leq d$.

\end{lemma}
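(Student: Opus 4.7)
The plan is to produce an $R$-automorphism $\phi$ of $\proj^1_R$ sending every $\alpha_i$ into $R \subset \aff^1_R(R)$, and then show that all the data on both sides of Theorem \ref{thm comparison} are preserved when $(\alpha_i)$ is replaced by $(\alpha_i' := \phi(\alpha_i))$. The only real input needed is the strict Henselianity of $R$; this is a coordinate-change reduction.

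To construct $\phi$, I would exploit that the residue field $k$ of $R$ is separably closed, hence infinite (in residue characteristic $p > 0$ it contains $\overline{\ff_p}$). Each $\alpha_i$ extends uniquely to an $R$-section of the proper scheme $\proj^1_R$, with reduction $\bar{\alpha}_i \in \proj^1_k(k)$. Since $k$ is infinite, pick $\bar{\alpha}_0 \in \aff^1_k(k) = k$ distinct from every $\bar{\alpha}_i$, and lift it, by strict Henselianity applied to the smooth morphism $\aff^1_R \to \Spec R$, to $\alpha_0 \in \aff^1_R(R) = R$. Then $\phi(z) := 1/(z - \alpha_0)$ defines an element of $\PGL_2(R)$, and a case check shows $\alpha_i' := \phi(\alpha_i) \in R$ for every $i$: if $\alpha_i \in R$ then $\alpha_i - \alpha_0 \in R$ has nonzero reduction (as $\bar{\alpha}_i \neq \bar{\alpha}_0$) and hence lies in $R^\times$, so $\alpha_i' = (\alpha_i - \alpha_0)^{-1} \in R$; and if $\alpha_i = \infty$ then $\alpha_i' = 0 \in R$.

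Next I would transport Theorem \ref{thm comparison} along $\phi$. As an $R$-automorphism of the smooth $R$-surface $\proj^1_R$, $\phi$ preserves local intersection multiplicities of horizontal divisors, so $E_{i,j}' = E_{i,j}$ for all $i,j$. Viewed over $K$, $\phi$ also yields an isomorphism $\proj^1_K \smallsetminus \{\alpha_1, \ldots, \alpha_d\} \cong \proj^1_K \smallsetminus \{\alpha_1', \ldots, \alpha_d'\}$; after base-change to $\bar K$ and transporting the base point $P$ (which is $K$-rational, and whose image under $\phi$ is again $K$-rational), this gives a $G_K$-equivariant isomorphism identifying $\rho_\alg$ with its primed analog and hence also $\rho_\alg^{(p')}$ with its primed version. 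Leaving the topological data $(a_i)$ and the numbers $e_{i,j}$ unchanged, the hypothesis $e_{i,j} = E_{i,j}$ for the original configuration becomes $e_{i,j} = E_{i,j}'$ for the primed one; applying the assumed primed case of the theorem and composing the resulting isomorphism with the one coming from $\phi$ proves the theorem for $(\alpha_i)$. I do not expect a serious obstacle here, but the one point worth spelling out is compatibility of the splittings of the short exact sequence (\ref{short exact sequence of algebraic fundamental groups}) at $P$ and at $\phi(P)$ under $\phi$, which is immediate since $\phi$ is defined over $K$.
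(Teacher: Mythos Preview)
Your argument is correct and is in fact more direct than the paper's. One small gap in your case check: you omit the possibility $\alpha_i \in K \smallsetminus R$ (i.e.\ $v(\alpha_i) < 0$ but $\alpha_i \neq \infty$). Here the $R$-section reduces to $\infty \in \proj^1_k$, so $\bar\alpha_0 \neq \bar\alpha_i$ is automatic, and since $v(\alpha_i - \alpha_0) = v(\alpha_i) < 0$ one gets $\phi(\alpha_i) = (\alpha_i - \alpha_0)^{-1} \in \pi R$. With that case added, your single $\PGL_2(R)$-move sends every $\alpha_i$ into $R$ while preserving the $E_{i,j}$, and the rest of your transport argument goes through. (The appeal to Henselianity for lifting $\bar\alpha_0$ is unnecessary---any lift in $R$ works---but it does no harm.)

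The paper proceeds differently. After a first M\"obius step it only arranges $\alpha_i \in K$, and then applies a power of $z \mapsto \pi z$ to push the points into $R$. This second map lies in $\PGL_2(K)$ but not $\PGL_2(R)$, so it genuinely changes the intersection numbers $E_{i,j}$. Consequently the paper cannot leave the topological side untouched: it must invoke the explicit Dehn-twist description of Theorem~\ref{thm main topological} to exhibit a bijection between the combinatorial index sets $\mathcal{I}$ and $\mathcal{I}'$ and argue that the associated \emph{outer} topological monodromy actions coincide. Your choice of a bona fide $R$-automorphism avoids this entirely: the $E_{i,j}$ are preserved, the same power series $a_i$ serve on the topological side, and the reduction becomes purely algebraic. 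The paper's route has the incidental payoff of displaying an explicit equivalence of outer actions under a nontrivial change of intersection data, but for the purpose of this lemma your approach is both shorter and conceptually cleaner.
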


\begin{proof}
We first observe that every $K$-automorphism $\phi$ of $\proj_{K}^{1}$ induces an isomorphism $$\pi_{1}^{\et}(\proj_{\bar{K}}^{1} 
\smallsetminus \{\alpha_{1}, ... , \alpha_{d}\}, \phi^{-1}(\infty)) \stackrel{\sim}{\to} \pi_{1}^{\et}(\proj_{\bar{K}}^{1} \smallsetminus 
\{\phi(\alpha_{1}), ... , \phi(\alpha_{d})\}, \infty)$$
which respects the action of $G_{K}$.

Choose $\beta \in R^{\times}$ such that $\alpha_i - \beta \in R^{\times}$ for $1 \leq i \leq d$. (This is always possible because the residue field of $R$ is infinite.) Then it is easy to check that the automorphism $\phi$ given by $z \mapsto z / (z-\beta)$ respects the intersection pairing. Thus, we may move any of the $\alpha_{i}$'s away from infinity, so assume that $\alpha_{i} \in K$ for $1 \leq i \leq d$.

Note that we may apply some power of the automorphism $\phi : z \mapsto \pi z$ to move the $\alpha_{i}$'s to elements of $R$, and that in general, this $\phi$ will change the intersection behavior of the $\alpha_i$'s. Let $E'_{i,j}$ be the intersection index of $\pi \alpha_i$ with $\pi \alpha_j$. Let $a'_{1},...,a'_d$ be elements of $\mathbb{C}[[x]]$ with intersection indices $E'_{i,j}$, and assume without loss of generality that the $a_i$'s as well as the $a'_i$'s satisfy Hypothesis \ref{hypothesis varepsilon} for $\varepsilon$ and that $z_0$ is chosen so that Theorem \ref{thm main topological} will hold for both the $a_i$'s and the $a'_i$'s. By Remark \ref{rmk intersection behavior}(a), we may assume without loss of generality that $a'_{i}(z_{0}) = a_{i}(z_{0})$ for $1 \leq i \leq d$. Define the action $(\bar{\rho}'_{\tp})^{(p')}$ for the $a'_i$'s in the same way that $\bar{\rho}_{\tp}^{(p')}$ was defined for the $a_i$'s. Then it follows from our observation at the beginning of the proof together with the statement of Theorem \ref{thm comparison} for the case of points in $R$ implies that the action $(\bar{\rho}'_{\tp})^{(p')}$ factors through $G_{K}^{(p')}$ and is isomorphic to $\bar{\rho}_{\alg}^{(p')}$. It will therefore suffice to show that the outer monodromy actions $\bar{\rho}_{\tp}$ and $\bar{\rho}'_{\tp}$ are isomorphic.

Define $\mathcal{I}$ as in \S\ref{sec2}, and define $\mathcal{I}'$ analogously for the power series $a'_i$; similarly, define the loops $\gamma_{I, n}$ for $(I, n) \in \mathcal{I}$ as in \S\ref{sec2} and define the loops $\gamma'_{I', n'}$ for $(I', n') \in \mathcal{I}$ analogously. Assume that the $\alpha_i$'s are ordered so that for some $l$, we have that $\alpha_i \notin R$ if and only if $i \in \{l + 1, ... , d\}$. Then it is clear that there is a bijection $\Phi: \mathcal{I} \to \mathcal{I'}$ given by $(I, n) \mapsto (I, n + 1)$ if $I \subseteq \{1, ... , l\}$, $(I, n) \mapsto (I, n - 1)$ if $I \subseteq \{l + 1, ... , d\}$ and $n \geq 2$, and $(\{l + 1, ... , d\}, 1) \mapsto (\{1, ... , l\}, 1)$. Note moreover that for each $(I, n) \in \mathcal{I}$, the loop $\im(\gamma_{I, n})$ separates the subset of points $\{a_{i}(z_{0})\}_{i \in I}$ from its complement in $\{a_{i}(z_{0})\}_{i = 1}^{d}$ in $\proj_{\cc}^{1}$ and that the same statement holds for each $(I', n') \in \mathcal{I}'$. It is easy to see from this that the loop $\gamma_{I,n}$ is homotopic to the the loop $\gamma'_{\Phi((I,n))}$ on $(\mathbb{P}^1_{\mathbb{C}})^{\an} \smallsetminus\{a_1(z_0),...,a_d(z_0)\}$. Therefore the Dehn twists $D_{I,n}$ and $D_{\Phi((I,n))}$ act on $\pi_{1}(\mathcal{F}_{z_{0}}, \infty)$ in the same way up to inner automorphism. Now it follows from the description of the monodromy action in Theorem \ref{thm main topological} that the actions $\bar{\rho}_{\tp}$ and $\bar{\rho}'_{\tp}$ are isomorphic, and we are done.

\end{proof}

\subsection{Reduction to the case that $R$ is complete and embeds in $\cc$} \label{sec4.2}
We will assume for the remainder of this paper that $P = \infty$ and that $\alpha_i \in R$ for $1 \leq i \leq d$.

\begin{lemma} \label{prop factoring through prime-to-p}

The sequence of morphisms $\proj_{\bar{K}}^{1} \smallsetminus \{\alpha_{1}, ... , \alpha_{d}\} \to \proj_{K}^{1} \smallsetminus \{\alpha_{1}, ... , \alpha_{d}\} \to \Spec(K)$ induces a split short exact sequence 
\begin{equation} \label{short exact sequence of prime-to-p algebraic} 1 \to \pi_{1}^{\et}(\proj_{\bar{K}} \smallsetminus \{\alpha_{1}, ... , \alpha_{d}\}, P)^{(p')} \to \pi_{1}^{\et}(\proj_{K}^{1} \smallsetminus \{\alpha_{1}, ... , \alpha_{d}\}, P)^{(p')} \to G_{K}^{(p')} \to 1. \end{equation}
Furthermore, if $K = \cc[[x]]$ and $\alpha_i \in \cc[x]$ for $1 \leq i \leq d$, then for any prime $q$, we have
\begin{equation} \label{short exact sequence of prime-to-p algebraic two} 1 \to \pi_{1}^{\et}(\proj_{\bar{K}} \smallsetminus \{\alpha_{1}, ... , \alpha_{d}\}, P)^{(q')} \to \pi_{1}^{\et}(\proj_{K}^{1} \smallsetminus \{\alpha_{1}, ... , \alpha_{d}\}, P)^{(q')} \to G_{K}^{(q')} \to 1. \end{equation}

\end{lemma}

\begin{proof}

Exactness on the right is immediate from \cite[Proposition 3.2.5]{zalesskii}. Exactness on the left follows from Proposition \ref{prop kisin}, using \cite[Proposition 4]{leftexact}.

\end{proof}

\begin{lemma} \label{lemma assumptions on R}
 
There is a strictly Henselian DVR $R'$ with the following properties:

a) $R'$ is the completion of a countable subring of $R$ with uniformizer $\pi$ and which contains $\alpha_{1}, ... , \alpha_{d}$; and 

b) letting $K'$ denote the fraction field of $R'$ and $(\rho_{\alg}')^{(p')} : G_{K'}^{(p')} \to \Aut(\pi_{1}^{\et}(\proj_{\bar{K'}}^{1} \smallsetminus \{\alpha_{1}, ... , \alpha_{d}\}, \infty)^{(p')})$ be the Galois action analogous to $\rho_{\alg}^{(p')}$, there are isomorphisms $G_{K}^{(p')} \stackrel{\sim}{\to} G_{K'}^{(p')}$ and $\pi_{1}^{\et}(\proj_{\bar{K}}^{1} \smallsetminus \{\alpha_{1}, ... , \alpha_{d}\}, \infty)^{(p')} \stackrel{\sim}{\to} \pi_{1}^{\et}(\proj_{\bar{K'}}^{1} \smallsetminus \{\alpha_{1}, ... , \alpha_{d}\}, \infty)^{(p')}$ inducing an isomorphism of the actions $\rho_{\alg}^{(p')}$ and $(\rho_{\alg}')^{(p')}$.

\end{lemma}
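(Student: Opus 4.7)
The strategy is to construct $R_0$ as a countable subring of $R$ by a countably iterated enlargement, building in just enough data for its $\pi$-adic completion $R'$ to be strictly Henselian and for the prime-to-$p$ Galois actions to match. Start with the countable subring $A_0 := \zz[\pi, \alpha_1, \ldots, \alpha_d] \subseteq R$. Since $\pi_{1}^{\et}(\proj_{\bar{K}}^{1} \smallsetminus \{\alpha_{1}, ... , \alpha_{d}\}, P)^{(p')}$ is topologically finitely generated, there are only countably many isomorphism classes of finite prime-to-$p$ \'etale Galois covers of $\proj_{\bar K}^1 \smallsetminus \{\alpha_i\}$. By Proposition \ref{prop kisin}, each such cover is defined over a prime-to-$p$ tame extension of $K$, which (because $R$ is strictly Henselian) has the form $K(\pi^{1/m})$ for some $m$ coprime to $p$; choose defining equations for each cover with coefficients in the integral closure $R[\pi^{1/m}]$, and extract the countably many $R$-coefficients that appear.

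Now enlarge $A_0$ inductively, at each stage adjoining countably many elements of $R$ chosen so as to achieve, in the limit: (i) inverses in $R$ of every element of the current ring not lying in $\pi R$; (ii) the quotients $y/\pi$ for every $y$ in the current ring that lies in $\pi R$; (iii) lifts, guaranteed by the strict Henselianness of $R$, of the simple roots modulo $\pi$ of every monic polynomial with coefficients in the current ring; and (iv) the coefficients collected above together with any further $R$-elements required for descent of the covers. The countable union $R_0 \subseteq R$ is then a local domain which by (i) and (ii) is a DVR with uniformizer $\pi$, since any nonzero $x \in R_0$ satisfies $x = \pi^n u$ with $u \in R_0^\times$. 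By (iii), the residue field of $R_0$ is separably closed. Hence $R' := \widehat{R_0}$ is a complete DVR with uniformizer $\pi$ and separably closed residue field, which is to say strictly Henselian; this proves (a).

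For (b), observe that the prime-to-$p$ extensions of the fraction field of any strictly Henselian DVR with uniformizer $\pi$ are exhausted by the tame extensions $K(\pi^{1/n})$ for $n$ coprime to $p$. Fixing a compatible system of $n$-th roots of $\pi$ yields canonical identifications $G_{K}^{(p')} \cong \widehat{\zz}^{(p')} \cong G_{K'}^{(p')}$. By construction, every finite prime-to-$p$ Galois cover of $\proj_{\bar K}^1 \smallsetminus \{\alpha_i\}$ already descends to a cover of $\proj_{R_0}^1 \smallsetminus \{\alpha_i\}$ over some $K_0(\pi^{1/m})$, and hence base-changes in a canonical way to the corresponding cover of $\proj_{\bar{K'}}^1 \smallsetminus \{\alpha_i\}$; the resulting functor between the categories of finite prime-to-$p$ \'etale covers is fully faithful (since the defining data is the same) and essentially surjective (by the same countable enumeration applied to $K'$), giving the sought isomorphism of fundamental groups compatibly with the Galois actions.

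The principal obstacle is the bookkeeping in the iterated enlargement: one must verify that the mutually recursive conditions (i)--(iv) can all be simultaneously satisfied in a countable union and that the limit truly is a DVR (and not some more general one-dimensional local domain), and one must justify that the finite-descent argument for covers is preserved after passing to completions, which ultimately rests on Proposition \ref{prop kisin} together with the fact that the prime-to-$p$ Galois theory of a strictly Henselian DVR is rigidly controlled by the single datum of the uniformizer $\pi$.
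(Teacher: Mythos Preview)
Your construction in (a) via an iterated countable enlargement works and is a legitimate alternative to the paper's more structural route (take the integral closure in $R$ of the finitely generated subring, localize at $\pi$, strictly Henselize, complete). Both produce a strictly Henselian DVR $R'$ which is the completion of a countable subring of $R$ containing the $\alpha_i$'s.

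For (b), however, your argument diverges from the paper's and contains a gap. The paper does \emph{not} track individual covers at all. Instead, writing $K''$ for the fraction field of the countable intermediate ring, it sets up a three-row commutative diagram of split short exact sequences of prime-to-$p$ fundamental groups over $K$, $K''$, $K'$. On the left, the vertical maps are isomorphisms because the \'etale fundamental group of a variety over an algebraically closed field of characteristic $0$ is invariant under extension of the base field; on the right, they are isomorphisms because $G_L^{(p')} \cong \widehat{\zz}^{(p')}$ for every strictly Henselian $L$ of residue characteristic $p$; the Five Lemma then handles the middle, and the compatibility of splittings gives the isomorphism of actions.

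Your condition (iv) --- building defining equations of all prime-to-$p$ covers of $\proj_{\bar K}^1 \smallsetminus\{\alpha_i\}$ into $R_0$ --- is therefore unnecessary, and it does not actually yield the essential surjectivity you claim. The phrase ``by the same countable enumeration applied to $K'$'' is not a proof: the covers of $\proj_{\bar{K'}}^1 \smallsetminus\{\alpha_i\}$ were never enumerated in constructing $R_0$, and to know that every such cover arises by base change from $K_0$ you would need precisely the invariance of $\pi_1^{\et}$ under extension of algebraically closed fields (applied to the inclusion $\overline{K_0} \hookrightarrow \overline{K'}$). Once you invoke that fact, condition (iv) and the explicit descent argument become superfluous, and you are essentially carrying out the paper's Five Lemma argument in disguise. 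I would drop (iv) entirely and replace your equivalence-of-categories sketch with the diagram argument.
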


\begin{proof}

Let $R''$ be the integral closure of $R \cap \qq(\pi,\alpha_{1}, ... , \alpha_{d})$ in $R$, and let $K''$ denote the fraction field of $R''$.  Then $R''$ contains $\pi$ and we have $\pi R \cap R'' = \pi R''$ because $\pi R \subset R$ is prime and every element in $R'' \smallsetminus \pi R''$ clearly has an inverse in $R''$.  The strict Henselization of the localization of $R''$ at $\pi R''$ is countable by construction; let $R'$ be its completion.  Then clearly $R'$ satisfies the properties stated in (i).  By Lemma \ref{prop factoring through prime-to-p}, there are split short exact sequences of prime-to-$p$ quotients of \'{e}tale fundamental groups associated to the projective line minus the points $\alpha_{1}, ... , \alpha_{d}$ over the schemes $\Spec(K)$, $\Spec(K'')$, and $\Spec(K')$.  We consider the commutative diagram below, where the rows are these split short exact sequences and the vertical arrows are induced by the field inclusions $\overline{K''} \subseteq \bar{K}$ and $\overline{K''} \subset \overline{K'}$.
\begin{equation} \label{commutative diagram assumptions on R} \xymatrix{ 1 \ar[r] & \pi_{1}^{\et}(\proj_{\bar{K}}^{1} \smallsetminus \{\alpha_{1}, ... , \alpha_{d}\}, \infty)^{(p')} \ar[r] \ar[d] & \pi_{1}^{\et}(\proj_{K}^{1} \smallsetminus \{\alpha_{1}, ... , \alpha_{d}\}, \infty)^{(p')} \ar[r] \ar[d] & G_{K}^{(p')} \ar@/_1pc/[l] \ar[r] \ar[d] & 1 
\\ 1 \ar[r] & \pi_{1}^{\et}(\proj_{\overline{K''}} \smallsetminus \{\alpha_{1}, ... , \alpha_{d}\}, \infty)^{(p')} \ar[r] & \pi_{1}^{\et}(\proj_{K''}^{1} \smallsetminus \{\alpha_{1}, ... , \alpha_{d}\}, \infty)^{(p')} \ar[r] & G_{K''}^{(p')} \ar@/_1pc/[l] \ar[r] & 1
\\ 1 \ar[r] & \pi_{1}^{\et}(\proj_{\overline{K'}}^{1} \smallsetminus \{\alpha_{1}, ... , \alpha_{d}\}, \infty)^{(p')} \ar[r] \ar[u] & \pi_{1}^{\et}(\proj_{K'}^{1} \smallsetminus \{\alpha_{1}, ... , \alpha_{d}\}, \infty)^{(p')} \ar[r] \ar[u] & G_{K'}^{(p')} \ar@/_1pc/[l] \ar[r] \ar[u] & 1 } \end{equation}
Clearly the vertical arrows on the left are isomorphisms since they are induced by inclusions of algebraically closed fields. Note that it follows from a special case of Abhyankar's Lemma that the prime-to-$p$ absolute Galois group of any strictly Henselian field of residue characteristic $p$ is isomorphic to $\widehat{\zz}^{(p')}$. Therefore, the vertical arrows on the right are also isomorphisms.  Then the vertical arrows in the middle are also isomorphisms by the Five Lemma.  Since $\rho_{\alg}^{(p')}$ and $(\rho_{\alg}')^{(p')}$ are induced by the splittings of the top and bottom rows respectively, we are done.

\end{proof}

\subsection{Reduction to the case $R = \cc[[x]]$} \label{sec4.3}

In order to reduce to the case of $R = \cc[[x]]$, we will construct a ring (which we will call $S$) lying inside $\cc[[x]]$ which has $K$ as a quotient. This general strategy was inspired by a similar one used in a different context in \cite[\S5]{oda1995note}.

We fix, once and for all, an embedding $R \hookrightarrow \cc$.  Let $S = R[[x]][\frac{1}{x}]$, and write $F$ for its fraction field.  The embedding $R \hookrightarrow \cc$ determines an inclusion of $F$ into the field $\cc((x))$; let $\bar{F}$ be the algebraic closure of $F$ inside $\overline{\cc((x))}$.  It is easy to verify that there is a unique  $R$-algebra surjection $\psi : S \twoheadrightarrow K$, continuous in the $x$-adic topology, which sends $x$ to $\pi$.

We now construct elements $\tilde{\alpha}_i \in R[x] \subset S$ with the same  intersection behavior as the $\alpha_i$'s and satisfying $\psi(\tilde{\alpha}_i) = \alpha$.  We observe that for any elements $\tilde{\alpha}_1, ... , \tilde{\alpha}_d \in S$ with $\psi(\tilde{\alpha}_i) = \alpha_i$ for $1 \leq i \leq d$, we have $e_{i, j} \leq E_{i, j}$ for $1 \leq i < j \leq d$, where $e_{i, j}$ is the $x$-adic valuation of  $\tilde{\alpha}_i-\tilde{\alpha}_j$.  Given such elements $\tilde{\alpha}_i \in R[x]$, we make adjustments to these polynomials in order to ensure that $e_{i, j} = E_{i, j}$ for $1 \leq i < j \leq d$, in the following manner.  Suppose that $m := e_{1, 2} < E_{1, 2}$.  Let $b \in R$ denote the coefficient of $x^{m}$ in $\tilde{\alpha}_2 - \tilde{\alpha}_1$; it is clear from the fact that $\pi^{m + 1}$ divides $\alpha_2 - \alpha_1$ that $b$ is divisible by $\pi$.  Then add the element $\pi^{-1}bx^{m}(x - \pi) \in \ker(\psi)$ to $\tilde{\alpha}_2$, so that we still have $\psi(\tilde{\alpha}_2) = \alpha_2$ but now $m + 1 \leq e_{1, 2} \leq E_{1, 2}$.  After repeating this process a finite number of times, we get $e_{1, 2} = E_{1, 2}$.  Therefore, we may start by setting $\tilde{\alpha}_i = \alpha_i \in R \subset R[x]$ for $1\leq i \leq d$, and then for $j$ running through $\{2,...,d\}$, and for $i$ running through $\{1,...,j-1\}$, follow this procedure to ensure that $e_{i, j} = E_{i, j}$. It is easy to verify that this gives us polynomials $\tilde{\alpha}_i \in R[x]$ with the desired properties. 

\begin{lemma} \label{lemma etale divisor}

The divisor on the surface $\proj_{S}^{1}$ given by the formal sum $\sum_{i = 1}^{d} (\tilde{\alpha}_{i})$ is \'{e}tale over $\Spec(S)$.

\end{lemma}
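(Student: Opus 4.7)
The plan is to prove that $D := \sum_{i=1}^d (\tilde{\alpha}_i)$ is \'etale over $\Spec(S)$ by showing that the $d$ sections $\tilde{\alpha}_i \colon \Spec(S) \to \proj_S^1$ are pairwise disjoint; this immediately yields that $D$ decomposes as a disjoint union of $d$ copies of $\Spec(S)$ mapping isomorphically onto their images, hence that $D$ is \'etale over $\Spec(S)$. Since all of the $\tilde{\alpha}_i$ lie in the standard affine chart, the sections $\tilde{\alpha}_i$ and $\tilde{\alpha}_j$ are disjoint if and only if their difference $\tilde{\alpha}_i - \tilde{\alpha}_j$ is a unit in $S$, so the proof reduces to verifying that each such difference is a unit.

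To this end, I would first observe that $S = R[[x]][1/x]$ is itself a DVR with uniformizer $\pi$: the primes of the $2$-dimensional regular local ring $R[[x]]$ not containing $x$ are just $(0)$ and $(\pi)$, which makes $S$ a one-dimensional Noetherian local domain with principal maximal ideal $\pi S$. An element $\sum_k c_k x^k \in S$ (written as a Laurent series with $c_k \in R$ vanishing for sufficiently negative $k$) lies in $\pi^n S$ if and only if every $c_k \in \pi^n R$, so the $\pi$-adic valuation on $S$ satisfies $v_\pi\bigl(\sum_k c_k x^k\bigr) = \min_k v_\pi(c_k)$. Thus it suffices to exhibit, for each $\beta := \tilde{\alpha}_i - \tilde{\alpha}_j$, a single coefficient of $\beta$ which is a unit in $R$.

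By the construction of the $\tilde{\alpha}_i$'s, the $x$-adic valuation of $\beta \in R[x]$ is exactly $E_{i,j}$, so I would write $\beta = x^{E_{i,j}} \gamma$ with $\gamma \in R[x]$ and $\gamma(0) \neq 0$. Applying the surjection $\psi$ (which sends $x \mapsto \pi$) yields $\alpha_i - \alpha_j = \sum_{k \geq E_{i,j}} c_k \pi^k$ in $R$, where $c_k$ denotes the coefficient of $x^k$ in $\beta$; equivalently, $(\alpha_i - \alpha_j)/\pi^{E_{i,j}} = \sum_{k \geq 0} c_{E_{i,j}+k}\, \pi^k$. Since $v_\pi(\alpha_i - \alpha_j) = E_{i,j}$ by the definition of $E_{i,j}$, the right-hand side has $\pi$-adic valuation zero in $R$, which forces its constant term $c_{E_{i,j}} = \gamma(0)$ to be a unit in $R$. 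Hence $v_\pi(\beta) = 0$, so $\beta \in S^\times$, as required.

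The delicate point is really the interplay between the two ``directions'' of valuation on $R[x] \subset S$: the $x$-adic valuation (governing intersection behavior on the special fiber over $x = 0$) and the $\pi$-adic valuation on $S$ (controlling the disjointness of the sections over $\Spec(S)$). The inductive construction of the $\tilde{\alpha}_i$'s was carried out precisely so that the $x$-adic valuation $e_{i,j}$ of $\beta$ matches the $\pi$-adic valuation $E_{i,j}$ of $\psi(\beta)$, which is exactly what rules out any cancellation in the $x^{E_{i,j}}$-coefficient of $\beta$ under $\psi$ and makes $c_{E_{i,j}}$ a unit in $R$.
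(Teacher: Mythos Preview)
Your overall strategy---showing that each difference $\beta := \tilde{\alpha}_i - \tilde{\alpha}_j$ is a unit in $S$, so that the sections are pairwise disjoint---is sound and is in essence what the paper establishes. Your computation that the coefficient $c_{E_{i,j}}$ of $x^{E_{i,j}}$ in $\beta$ lies in $R^\times$ is correct and matches the paper's key step verbatim. However, your justification for why this forces $\beta \in S^\times$ contains a genuine error: the ring $S = R[[x]][1/x]$ is \emph{not} a DVR. For instance, $(x - \pi)$ is a height-$1$ prime of $R[[x]]$ which does not contain $x$ (the quotient $R[[x]]/(x-\pi) \cong R$ is a domain in which $x$ maps to the nonzero element $\pi$), so it survives in $S$ as a maximal ideal distinct from $(\pi)$. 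Consequently the implication ``$v_\pi(\beta)=0 \Rightarrow \beta \in S^\times$'' fails in general: the element $x - \pi$ itself has a coefficient in $R^\times$ but is a non-unit in $S$.

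The fix is short and actually makes your argument cleaner than the paper's. Once you know $c_{E_{i,j}}\in R^\times$, write $\beta = x^{E_{i,j}}\cdot\gamma$ with $\gamma = c_{E_{i,j}} + c_{E_{i,j}+1}x + \cdots \in R[[x]]$. Since $R[[x]]$ is local with maximal ideal $(\pi,x)$ and $\gamma$ has constant term in $R^\times$, we get $\gamma \in R[[x]]^\times$; and $x^{E_{i,j}}$ is already a unit in $S$, so $\beta \in S^\times$. By contrast, the paper proceeds by first classifying all primes of $S$ via the Weierstrass preparation theorem (they are $(\pi)$ together with $(g)$ for irreducible distinguished $g\neq x$) and then checking that $\beta$ is nonzero modulo each such prime. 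Your corrected route bypasses that classification entirely, at the cost of needing the observation that it is the \emph{leading} $x$-adic coefficient---not merely some coefficient---that lies in $R^\times$.
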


\begin{proof}

It suffices to show that the map $\bigcup_{i = 1}^{d} (\tilde{\alpha}_{i}) \to \Spec(S)$ is unramified, or equivalently, that for $i \neq j$ and for each prime $\mathfrak{p}$ of $S$, the images of $\tilde{\alpha}_{i}$ and $\tilde{\alpha}_{j}$ modulo $\mathfrak{p}$ are distinct.  We claim that any prime of $S$ is either $(\pi)$ or of the form $(g)$, where $g \in R[x]$ is an irreducible monic polynomial all of whose non-leading coefficients lie in $(\pi)$ (we call any monic polynomial with this property a \textit{distinguished polynomial}).  Let $\mathfrak{p}$ be any prime ideal of $S$ different from $(\pi)$.  The $p$-adic Weierstrass preparation theorem says that each element of $R[[x]]$ can be written as the product $u\pi^{m}g$, where $u \in R[[x]]^{\times}$ is a unit, $m \geq 0$ is an integer, and $g$ is a distinguished polynomial.  Therefore, the prime $\mathfrak{p} \neq (\pi)$ of $S$ is generated by elements $g_{1}, \pi^{m_{2}}g_{2}, ... , \pi^{m_{r}}g_{r}$, where $m_{l} \geq 0$ is an integer for $2 \leq l \leq r$ and $g_{l} \in R[x]$ is distinguished for $1 \leq l \leq r$.  One checks using Gauss' Lemma that the greatest common divisor $g$ of the $g_{l}$'s is also a distinguished polynomial in $R[x]$.  Clearly $\mathfrak{p}$ contains $\pi^{m}g$, where $m$ is the maximum of the $m_{l}$'s.  Since $\mathfrak{p}$ is prime, we have $\pi \in \mathfrak{p}$ or $g \in \mathfrak{p}$.  If $\pi \in \mathfrak{p}$, then since $\pi$ divides the nonleading terms of the polynomial $g$, some power of $x$ is contained in $\mathfrak{p}$, which contradicts the fact that $x$ is a unit in $S$.  It follows that $g \in \mathfrak{p}$ and in fact that $\mathfrak{p} = (g)$ with $g$ irreducible, proving the above claim.

It now suffices to show that for $i \neq j$, the polynomials $\tilde{\alpha}_{i}$ and $\tilde{\alpha}_{j}$ are distinct modulo $(\pi)$ and modulo $(g)$ for any irreducible distinguished $g \in R[x] \smallsetminus \{x\}$.  Let $m = e_{i, j} = E_{i, j}$, so that the $m$th coefficients $b_{i, m}$ and $b_{j, m}$ of $\tilde{\alpha}_{i}$ and $\tilde{\alpha}_{j}$ are distinct.  Then, modulo $\pi$, we have $\psi(b_{j, m} - b_{i, m}) \equiv \psi(x^{-m}(\tilde{\alpha}_j - \tilde{\alpha}_i)) = \pi^{-m} (\alpha_j - \alpha_i) \in R^{\times}$, so we have $b_{j, m} - b_{i, m} \in R^{\times}$.  In particular, this shows that $\tilde{\alpha}_{i}$ and $\tilde{\alpha}_{j}$ are distinct modulo $(\pi)$.  Now clearly, $S / (g) = K(\beta)$ where $\beta \in \bar{K}$ is a root of the polynomial $g$; note that $\beta$ has positive valuation $s > 0$ due to the fact that $g$ is distinguished.  The images of $\tilde{\alpha}_{i}$ and $\tilde{\alpha}_{j}$ are given by the polynomials $\tilde{\alpha}_{i}(\beta)$ and $\tilde{\alpha}_{j}(\beta)$ in $K(\beta)$.  Now $\tilde{\alpha}_{i}(\beta) - \tilde{\alpha}_{j}(\beta)$ can be written as a polynomial in $\beta$ whose lowest-order term is $(b_{i, m} - b_{j, m})\beta^{m}$, which has valuation $ms$ and is therefore nonzero.  Thus, we have $\tilde{\alpha}_i(\beta) \neq \tilde{\alpha}_j(\beta)$, as desired.

\end{proof}

We will now show that the prime-to-$p$ Galois action associated to our scheme over $\cc((x))$ is isomorphic to the one associated to our scheme over $K$.

\begin{prop} \label{prop reducing to K}

With the above assumptions and notation, define $\rho_{0}^{(p')} : G_{\cc((x))} \to \Aut(\pi_{1}^{\et}(\proj_{\overline{\cc((x))}}^{1} \smallsetminus \{\tilde{\alpha}_{1}, ... , \tilde{\alpha}_{d}\}, \infty)^{(p')})$ to be the action induced by the action of $G_{\cc((x))}$ on $\pi_{1}^{\et}(\proj_{\overline{\cc((x))}}^{1} \smallsetminus \{\tilde{\alpha}_{1}, ... , \tilde{\alpha}_{d}\}, \infty)$ determined by the splitting of the fundamental short exact sequence 
\begin{equation} \label{short exact sequence of algebraic fundamental groups cc((x))} 1 \to \pi_{1}^{\et}(\proj_{\overline{\cc((x))}}^{1} \smallsetminus \{\tilde{\alpha}_{1}, ... , \tilde{\alpha}_{d}\}, \infty) \to \pi_{1}^{\et}(\proj_{\cc((x))}^{1} \smallsetminus \{\tilde{\alpha}_{1}, ... , \tilde{\alpha}_{d}\}, \infty) \to G_{\cc((x))} \to 1 \end{equation}
 induced by the point at infinity.  Then $\rho_{0}^{(p')}$ factors through $G_{\cc((x))}^{(p')}$, and we have isomorphisms $G_{K}^{(p')} \stackrel{\sim}{\to} G_{\cc((x))}^{(p')}$ and $\pi_{1}^{\et}(\proj_{\bar{K}}^{1} \smallsetminus \{\alpha_{1}, ... , \alpha_{d}\}, \infty)^{(p')} \stackrel{\sim}{\to} {\pi}_{1}^{\et}(\proj_{\overline{\cc((x))}}^{1} \smallsetminus \{\tilde{\alpha}_{1}, ... , \tilde{\alpha}_{d}\}, \infty)^{(p')}$ inducing an isomorphism of the actions $\rho_{\alg}^{(p')}$ and $\rho_{0}^{(p')}$.

\end{prop}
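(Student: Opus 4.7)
The plan is to use the smooth family $U := \proj_S^1 \smallsetminus \bigcup_{i=1}^d (\tilde{\alpha}_i) \to \Spec S$, which is smooth by Lemma \ref{lemma etale divisor}, as a bridge between the two settings: $\proj_K^1 \smallsetminus \{\alpha_1, ... , \alpha_d\}$ arises as $U \otimes_S K$ via the surjection $\psi$, while $\proj_{\cc((x))}^1 \smallsetminus \{\tilde{\alpha}_1, ... , \tilde{\alpha}_d\}$ arises as $U \otimes_S \cc((x))$ via the given embedding $S \hookrightarrow \cc((x))$. The factoring of $\rho_0^{(p')}$ through $G_{\cc((x))}^{(p')}$ follows immediately from the second statement of Proposition \ref{prop kisin} applied to the equicharacteristic setting $R = \cc[[x]]$, so the main content is the compatible isomorphism of Galois actions.

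To compare the geometric fundamental groups, let $F = \oper{Frac}(S)$ and recall that $\bar F$ denotes the algebraic closure of $F$ inside $\overline{\cc((x))}$. First, the invariance of the \'{e}tale fundamental group of a smooth variety under extension of algebraically closed base fields in characteristic zero yields an isomorphism $\pi_1^{\et}(U_{\overline{\cc((x))}}) \xrightarrow{\sim} \pi_1^{\et}(U_{\bar F})$. Next, consider the strict Henselization $\widetilde{S}$ of the local ring $S_{(x-\pi)}$; this is a strictly Henselian DVR of equicharacteristic zero whose residue field is $\bar K$ and whose fraction field is an algebraic extension of $F$, which we may take to lie in $\bar F$. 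Since $U_{\widetilde{S}} \to \Spec \widetilde{S}$ is an open subscheme of $\proj^1_{\widetilde{S}}$ whose complement is \'{e}tale over the base, the specialization theorem for the tame fundamental group (SGA1, Expos\'{e} XIII) yields an isomorphism $\pi_1^{\et}(U_{\bar F}) \xrightarrow{\sim} \pi_1^{\et}(U_{\bar K})$; in equicharacteristic zero, the tame and \'{e}tale fundamental groups coincide. Composing and then passing to $(p')$-quotients produces the sought-after isomorphism $\pi_1^{\et}(U_{\overline{\cc((x))}})^{(p')} \xrightarrow{\sim} \pi_1^{\et}(U_{\bar K})^{(p')}$.

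To promote this to an isomorphism between the full short exact sequences encoding $\rho_{\alg}^{(p')}$ and $\rho_0^{(p')}$, I would use that both $G_K^{(p')}$ and $G_{\cc((x))}^{(p')}$ are canonically identified with $\widehat{\zz}^{(p')}$ by Abhyankar's Lemma, and that the point at infinity extends from the two fibers to a global section $\Spec S \to U$, giving compatible splittings throughout. Comparing both short exact sequences with the analogous one obtained over $\Spec \widetilde{S}$ and invoking the functoriality of the specialization map, one identifies the splittings canonically and thereby obtains the desired isomorphism of actions.

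The main technical obstacle is the specialization isomorphism for the non-proper family $U$: although $U$ is not proper over $\Spec \widetilde{S}$, the \'{e}taleness of the boundary divisor (Lemma \ref{lemma etale divisor}) places us in a tame situation where, in equicharacteristic zero, the full \'{e}tale fundamental group is preserved under specialization. Ensuring the Galois equivariance then reduces to tracking how the specialization map interacts with the splittings coming from the $\infty$-section, which is formal given the functoriality of the construction in the base.
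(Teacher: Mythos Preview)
Your proposal follows essentially the same strategy as the paper: use the family $U \to \Spec S$ as a bridge, invoke the \'etaleness of the boundary divisor (Lemma \ref{lemma etale divisor}) to apply a specialization theorem for open curves, and track the $\infty$-section to match splittings. The paper cites \cite[Th\'eor\`eme 4.4]{orgogozo2000theoreme} together with \cite[XIII.4.3, XIII.4.4]{grothendieck224revetements} rather than SGA1 alone, but in your equicharacteristic-zero localization this distinction is harmless.

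There is, however, one organizational point where your version is weaker than the paper's and leaves a gap. You pass to the strict Henselization $\widetilde S$ of $S_{(x-\pi)}$; since $\widetilde S$ is strictly Henselian, $\pi_1^{\et}(\Spec\widetilde S)$ is trivial, so ``the analogous short exact sequence over $\Spec\widetilde S$'' has trivial right-hand term and cannot by itself link $G_K^{(p')}$ to $G_{\cc((x))}^{(p')}$. Knowing that both groups are abstractly $\widehat\zz^{(p')}$ is not enough: you must exhibit a \emph{specific} isomorphism under which the two actions on the (already identified) geometric $\pi_1^{(p')}$ agree. The paper handles this by working over $\Spec S$ itself, computing $\pi_1^{\et}(\Spec S)^{(p')}\cong\widehat\zz^{(p')}$ via Abhyankar's Lemma, and then assembling a four-row commutative diagram in which both $G_K^{(p')}\to\pi_1^{\et}(\Spec S,\bar s)^{(p')}$ (induced by $\psi$) and $G_{\cc((x))}^{(p')}\to\pi_1^{\et}(\Spec S,\bar\eta)^{(p')}$ (induced by $S\hookrightarrow\cc((x))$) are isomorphisms; the Five Lemma then finishes. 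Your argument can be repaired by replacing $\widetilde S$ with $S$ (or at least its Henselization rather than its strict Henselization) so that the base retains a nontrivial $\pi_1$ through which both Galois groups factor compatibly.
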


\begin{proof}

By Proposition \ref{prop kisin} applied to the fundamental short exact sequences associated to the families over both $K$ and $\cc((x))$, we have that $\rho_{0}^{(p')}$ and $\rho_{\alg}^{(p')}$ factor through $G_{\cc((x))}^{(p')}$ and $G_{K}^{(p')}$ respectively. By Lemma \ref{prop factoring through prime-to-p}, taking prime-to-$p$ quotients of the terms in the fundamental short exact sequences corresponding to each yields split short exact sequences of prime-to-$p$ quotients inducing $\rho_{0}^{(p')}$ and $\rho_{\alg}^{(p')}$ respectively.  Therefore, it suffices to show that these two split short exact sequences of prime-to-$p$ quotients are isomorphic.  We will do so by constructing intermediate short exact sequences of prime-to-$p$ quotients of \'{e}tale fundamental groups, associated to a family of $S$-schemes, which are isomorphic to both.

Consider the family $\mathfrak{F}_{S} := \proj_{S}^{1} \smallsetminus \{\tilde{\alpha}_{1}, ... , \tilde{\alpha}_{d}\} \to \Spec(S)$.  We write $\mathfrak{F}_{F}$, $\mathfrak{F}_{\cc((x))}$, and $\mathfrak{F}_{K}$ for the base changes of $\mathfrak{F}_{S}$ with respect to the inclusions $S \hookrightarrow F$ and $S \hookrightarrow \cc((x))$ and the surjection $\psi: S \twoheadrightarrow K$ respectively, and we write $\mathfrak{F}_{\bar{F}}$, etc. for their geometric fibers.  Note in particular that $\mathfrak{F}_{K} = \proj_{K}^{1} \smallsetminus \{\alpha_{1}, ... , \alpha_{d}\}$.  Let $\bar{\eta}$ be a generic geometric point over the generic point $\eta \in \Spec(S)$, and let $\bar{s}$ be a geometric point over the closed point $s := (x - \pi) \in \Spec(S)$.  Since the divisor given by the points $\tilde{\alpha}_{i} \in \proj_{S}^{1}$ is \'{e}tale over $\Spec(S)$ by Lemma \ref{lemma etale divisor}, we can apply \cite[Proposition XIII.4.3 and Exemples XIII.4.4]{grothendieck224revetements}, which shows that the sequence of morphisms $\mathfrak{F}_{\bar{K}} \to \mathfrak{F}_{S} \to \Spec(S)$ along with the section at infinity induces a split short exact sequence 
\begin{equation} \label{short exact sequence Grothendieck} 1 \to \pi_{1}^{\et}(\mathfrak{F}_{\bar{K}}, \infty)^{(p')} \to \pi_{1}^{\et}(\mathfrak{F}_{S}, \infty_{\bar{s}})' \to \pi_{1}^{\et}(\Spec(S), \bar{s}) \to 1. \end{equation}
Here $\pi_{1}^{\et}(\mathfrak{F}_{S}, \infty)'$ is the quotient of $\pi_{1}^{\et}(\mathfrak{F}_{S}, \infty)$ constructed in \cite[\S XIII.4]{grothendieck224revetements}.  In fact, the generalized version of Abhyankar's Lemma appearing as \cite[Theorem 2.3.2]{grothendieck1971tame}, along with the fact that there are no \'{e}tale covers of $\Spec(S)$ of degree divisible by $p$, implies that the only \'{e}tale covers of $\Spec(S)$ are induced by adjoining prime-to-$p$ roots of $x$ to $S$.  Thus, we have $\pi_{1}^{\et}(\Spec(S), \overline{s}) = \pi_{1}^{\et}(\Spec(S), \overline{s})^{(p')} \cong \hat{\zz}^{(p')}$.  It follows that $\pi_{1}^{\et}(\mathfrak{F}_{S}, \infty)' = \pi_{1}^{\et}(\mathfrak{F}_{S}, \infty)^{(p')}$, so (\ref{short exact sequence Grothendieck}) is actually a short exact sequence of prime-to-$p$ quotients of \'{e}tale fundamental groups.

Similarly, we have a split short exact sequence 
\begin{equation} \label{exact sequence of prime-to-p generic} 1 \to \pi_{1}^{\et}(\mathfrak{F}_{\bar{F}}, \infty)^{(p')} \to \pi_{1}^{\et}(\mathfrak{F}_{S}, \infty_{\bar{\eta}})^{(p')} \to \pi_{1}^{\et}(\Spec(S), \bar{\eta})^{(p')} \to 1. \end{equation}
Choose compatible change-of basepoint isomorphisms $\pi_{1}^{\et}(\mathfrak{F}_{S}, \infty_{\bar{\eta}})^{(p')} \stackrel{\sim}{\to} \pi_{1}^{\et}(\mathfrak{F}_{S}, \infty_{\bar{s}})^{(p')}$ and 

\noindent $\pi_{1}^{\et}(\Spec(S), \bar{\eta})^{(p')} \stackrel{\sim}{\to} \pi_{1}^{\et}(\Spec(S), \bar{s})^{(p')}$.  Since the divisor given by the points $\tilde{\alpha}_{i} \in \proj_{S}^{1}$ is \'{e}tale over $\Spec(S)$ by Lemma \ref{lemma etale divisor}, we can apply a variant of Grothendieck's Specialization Theorem (\cite[Th\'{e}or\`{e}me 4.4]{orgogozo2000theoreme}) to obtain an isomorphism $\mathrm{sp} : \pi_{1}^{\et}(\mathfrak{F}_{\bar{F}}, \infty)^{(p')} \stackrel{\sim}{\to} \pi_{1}^{\et}(\mathfrak{F}_{\bar{K}}, \infty)^{(p')}$.  It is easy to check that $\mathrm{sp}$ commutes with the horizontal arrows in (\ref{exact sequence of prime-to-p generic}) and (\ref{short exact sequence Grothendieck}) and the change-of-basepoint isomorphisms.  It follows that the sequence in (\ref{exact sequence of prime-to-p generic}) is isomorphic as a split short exact sequence to the one in (\ref{short exact sequence Grothendieck}).

Now these isomorphic exact sequences induced by the family over $S$ fit into the commutative diagram below.

\begin{equation} \label{commutative diagram reducing to K} \xymatrix{ 1 \ar[r] & \pi_{1}^{\et}(\mathfrak{F}_{\bar{K}}, \infty)^{(p')} \ar[r] 
\ar@{=}[d] & \pi_{1}(\mathfrak{F}_{K}, \infty)^{(p')} \ar[r] \ar[d] & G_{K}^{(p')} \ar@/_1pc/[l] \ar[r] \ar[d]^{\wr} & 1
\\ 1 \ar[r] & \pi_{1}^{\et}(\mathfrak{F}_{\bar{K}}, \infty)^{(p')} \ar[r] & \pi_{1}^{\et}(\mathfrak{F}_{S}, \infty_{\bar{s}})^{(p')} \ar[r] & \pi_{1}^{\et}(\Spec(S), \bar{s})^{(p')} \ar@/_1pc/[l] \ar[r] & 1 
\\ 1 \ar[r] & \pi_{1}^{\et}(\mathfrak{F}_{\bar{F}}, \infty)^{(p')} \ar[r] \ar[u]^{\mathrm{sp} \wr} & \pi_{1}^{\et}(\mathfrak{F}_{S}, \infty_{\bar{\eta}})^{(p')} \ar[r] \ar[u]^{\wr} & \pi_{1}^{\et}(\Spec(S), \bar{\eta})^{(p')} \ar@/_1pc/[l] \ar[r] \ar[u]^{\wr} & 1
\\ 1 \ar[r] & \pi_{1}^{\et}(\mathfrak{F}_{\overline{\cc((x))}}, \infty)^{(p')} \ar[r] \ar[u]^{\wr} & \pi_{1}^{\et}(\mathfrak{F}_{\cc((x))}, \infty)^{(p')} \ar[r] \ar[u] & G_{\cc((x))}^{(p')} \ar@/_1pc/[l] \ar[r] \ar[u]^{\wr} & 1 } \end{equation}
Here the vertical maps from the terms in the first row to those in the second row are induced by the surjection $\psi : S \twoheadrightarrow K$, and the vertical maps from the terms in the last row to those in the third row are induced by the inclusion $S \hookrightarrow \cc((x))$ given above.  The bottom vertical arrows on the left is an isomorphism because $\bar{F} \hookrightarrow \overline{\cc((x))}$ is an inclusion of algebraically closed fields, and the top and bottom vertical arrows on the right are isomorphisms because all of these groups are isomorphic to $\hat{\zz}^{(p')}$, as noted above.  Therefore, the split short exact sequences at the top and bottom are isomorphic.  Since the actions $\rho_{\alg}^{(p')}$ and $\rho_{0}^{(p')}$ are induced by these sequences, we get the claimed isomorphism between $\rho_{\alg}^{(p')}$ and $\rho_{0}^{(p')}$.

\end{proof}

\begin{cor}

In order to prove Theorem \ref{thm comparison}, it suffices to prove the statement for $R = \cc[[x]]$, with the assumption that the elements $\alpha_i$ are polynomials.

\end{cor}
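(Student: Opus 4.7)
The plan is to chain together the three reductions from Sections \ref{sec3.1}, \ref{sec3.2}, and \ref{sec3.3}, using Remark \ref{rmk intersection behavior}(a) at the end to match up the topological side.

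First I would invoke the lemma of \S\ref{sec3.1} to reduce to the case $P = \infty$ and $\alpha_i \in R$ for each $i$. Next, Lemma \ref{lemma assumptions on R} allows one to replace $R$ by a strictly Henselian DVR $R'$ which is the completion of a countable subring of $R$ containing $\pi$ and the $\alpha_i$, in such a way that the algebraic action $\rho_{\alg}^{(p')}$ is carried to an isomorphic action $(\rho_{\alg}')^{(p')}$; note in particular that the intersection numbers $E_{i,j}$ are unchanged. Because $R'$ has cardinality at most that of the continuum and is of characteristic zero, one may fix an embedding $R' \hookrightarrow \cc$, which is precisely the input needed to run the construction of \S\ref{sec3.3}. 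That construction yields polynomials $\tilde{\alpha}_i \in R'[x] \subset S$ with $\psi(\tilde{\alpha}_i) = \alpha_i$ and with the same pairwise intersection multiplicities $e_{i,j} = E_{i,j}$. Proposition \ref{prop reducing to K} then supplies an isomorphism between $\rho_{\alg}^{(p')}$ (for the original data) and $\rho_0^{(p')}$ (associated to the polynomials $\tilde{\alpha}_i$ viewed over $\cc((x))$).

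On the topological side, Theorem \ref{thm main topological}, or equivalently Remark \ref{rmk intersection behavior}(a), shows that the outer action $\bar{\rho}_{\tp}^{(p')}$ depends only on the intersection data. Since the $E_{i,j}$ have been preserved at every stage, the topological side associated to the original $(R, \alpha_1, \ldots, \alpha_d)$ agrees canonically with the one associated to $(\cc[[x]], \tilde{\alpha}_1, \ldots, \tilde{\alpha}_d)$. Hence if Theorem \ref{thm comparison} is known in the case $R = \cc[[x]]$ with polynomial branch points, the full statement for the original $(R, \alpha_i)$ follows by composing the resulting isomorphism with those from the two reductions; the upgrade from outer actions to actions in part (b) transfers as well, since the hypothesis $\alpha_i \in R \cup \{\infty\}$ is automatically satisfied after the first reduction.

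No single step here is genuinely hard, since all of the analytic and geometric content has been packaged in the preceding subsections. The only point requiring care is that the three reductions compose coherently, i.e.\ that both the algebraic and topological sides are controlled at each stage by the same intersection data $(E_{i,j})$. This coherence is built into Lemma \ref{lemma assumptions on R} and Proposition \ref{prop reducing to K} on the algebraic side, and is tied off on the topological side by the invariance observed in Remark \ref{rmk intersection behavior}(a).
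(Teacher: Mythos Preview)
Your proposal is correct and follows essentially the same approach as the paper: chain the reductions of \S\ref{sec3.1}--\S\ref{sec3.3} (via Lemma~\ref{lemma assumptions on R} and Proposition~\ref{prop reducing to K}) on the algebraic side, and invoke Remark~\ref{rmk intersection behavior}(a) to carry the topological side along by the preserved intersection data. The paper's own proof is more compressed only because by that point the reductions of \S\ref{sec3.1} and \S\ref{sec3.2} are already standing assumptions, so it needs only cite Proposition~\ref{prop reducing to K} and the assumed $\cc((x))$ case.
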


\begin{proof}

Since $\rho_{0}^{(p')}$ factors through $G_{\cc((x))}^{(p')}$ by Proposition \ref{prop factoring through prime-to-p}, it follows from the statement of Theorem \ref{thm comparison} over $\cc((x))$ that $\rho_{\tp}^{(p')}$ factors through $\pi_1(B_{\varepsilon}^{*},z_0)^{(p')}$ and that the actions $\rho_{0}^{(p')}$ and $\rho_{\tp}^{(p')}$ are isomorphic. The desired statement then follows from Proposition \ref{prop reducing to K}.

\end{proof}

\begin{rmk}

In fact, it is possible to generalize these results, and hence the statement of Theorem \ref{thm comparison}, to the case that $R$ is a strictly Henselian DVR of characteristic $p > 0$, at least under the assumption that its residue field is $\bar{\ff}_p$. Suppose that $R$ is a (strictly) Henselian DVR with residue field $\bar{\ff}_p$ and fraction field $K$ (e.g. $K$ is a strict Henselization of a global field of characteristic $p$ localized at some prime). The arguments in \S\ref{sec4.1} and \S\ref{sec4.2} proceed more or less in the same way that they did in the case of characteristic $0$. To prove Proposition \ref{prop reducing to K}, one may use a variant of the construction and arguments developed in this section, which we outline below.

We first note that a given uniformizer $\pi$ is transcendental over $\bar{\ff}_p \subset R$, which implies (after applying the arguments in \S\ref{sec4.2} and replacing $R$ with its completion) that $R = \bar{\ff}_p[[\pi]]$. Let $W(\bar{\ff}_p)$ denote the Witt ring of $\bar{\ff}_p$; let $x$ be transcendental over $W(\bar{\ff}_p)$; and let $S = W(\bar{\ff}_p)[[x]][\frac{1}{x}]$. Then it is clear that we have a unique surjection $\psi : S \twoheadrightarrow K$ sending $x$ to $\pi$ which is continuous in the $x$-adic topology, namely the quotient map with respect to the prime $(p) \subset S$. Choose any power series $\tilde{\alpha}_1, ... , \tilde{\alpha}_d \in W(\bar{\ff}_p)[[x]] \subset S$ whose coefficients all lie in $W(\bar{\ff}_p)^{\times}$ and which satisfy $\phi(\tilde{\alpha}_i) = \alpha_i$ for $1 \leq i \leq d$; note that we already have $e_{i,j} = E_{i,j}$ for $1 \leq i < j \leq d$. Then we get the statement of Lemma \ref{lemma etale divisor} through a very similar argument, which in turn allows us to prove Proposition \ref{prop reducing to K} exactly as we did for characteristic $0$.

\end{rmk}

\subsection{Proof of Theorem \ref{thm comparison}(b)} \label{sec4.4}

By what has been shown in the above subsections, along with Remark \ref{rmk intersection behavior}(a), we may assume with loss of generality that $R=\cc[[x]]$ and that $a_i = \alpha_i \in \cc[x]$ for $1 \leq i \leq d$.  The statement of Theorem \ref{thm comparison}(b) now results from the following proposition. (For a sketch of an alternative approach, see the end of \cite[\S2.3]{wewsy}. Note that Remark 2.13 in the same paper does not imply a description of the non-equicharacteristic inertia action, because \cite[Theorem 2.12]{wewsy} and its proof are insufficient for determining this action.)

\begin{prop} \label{prop isomorphic to family over cc((x))}

Theorem \ref{thm comparison}(b) holds when $R=\mathbb{C}[[x]]$ and $\alpha_i = a_i \in \cc[x]$ for $1 \leq i \leq d$.

\end{prop}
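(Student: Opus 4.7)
The plan is to realize both the algebraic and the topological monodromy actions as pullbacks of a single monodromy representation associated to the smooth locus of the global family $\mathfrak{X} := \proj_{\aff_{\cc}^1}^1 \smallsetminus \bigcup_i \sigma_i$, where each $\sigma_i$ is the section of $\proj_{\aff_{\cc}^1}^1$ corresponding to the polynomial $\alpha_i \in \cc[x]$. Let $U \subset \aff_{\cc}^1$ be the Zariski open subset over which the $\sigma_i$'s remain pairwise disjoint; then $\mathfrak{X}|_U \to U$ is a smooth family, split by the globally defined infinity section. After shrinking $\varepsilon$ if necessary, we may assume $B_{\varepsilon}^* \subset U^{\an}$, so that the analytic family $\mathcal{F} \to B_{\varepsilon}^*$ is precisely the restriction of $\mathfrak{X}|_U^{\an} \to U^{\an}$.

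By Riemann's existence theorem, the \'etale fundamental groups of the smooth complex varieties $\mathfrak{X}|_U$ and $U$ are canonically isomorphic to the profinite completions of their topological fundamental groups, compatibly with the splittings induced by the infinity section; this yields a single monodromy action $\rho_U : \pi_1^{\et}(U, z_0) \to \Aut(\pi_1^{\et}(\mathfrak{X}_{z_0}, \infty_{z_0}))$. The topological action $\rho_{\tp}$ is then the pullback of $\rho_U$ along the map $\widehat{\pi}_1(B_{\varepsilon}^*, z_0) \hookrightarrow \widehat{\pi}_1(U^{\an}, z_0) \cong \pi_1^{\et}(U, z_0)$ induced by $B_{\varepsilon}^* \hookrightarrow U^{\an}$. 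For the algebraic side, the composition $\Spec(K) \to \Spec(R) \to \aff_{\cc}^1$ lands at the generic point of $\aff_{\cc}^1$ and induces a homomorphism $G_K \to \pi_1^{\et}(U, z_0)$ with image the inertia subgroup at $0$; combined with the canonical identification $\pi_1^{\et}(\proj_{\bar K}^1 \smallsetminus \{\alpha_1, \ldots, \alpha_d\}, \infty) \cong \pi_1^{\et}(\mathfrak{X}_{z_0}, \infty_{z_0})$ furnished by base-change invariance of $\pi_1^{\et}$ under extension of algebraically closed characteristic-$0$ fields (using the globally defined infinity section to align basepoints), this realizes $\rho_{\alg}$ as the pullback of $\rho_U$ along $G_K \hookrightarrow \pi_1^{\et}(U, z_0)$.

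The main technical subtlety is verifying that, inside $\pi_1^{\et}(U, z_0)$, the images of $\widehat{\pi}_1(B_{\varepsilon}^*, z_0)$ and $G_K$ coincide as the same copy of the inertia subgroup $\widehat{\zz}$ at $0$, and that the resulting isomorphism identifies the counterclockwise generator $\delta$ of $\widehat{\pi}_1(B_{\varepsilon}^*, z_0)$ with the canonical Galois generator of $G_K$ arising from the Kummer tower $\{\cc((x^{1/n}))\}_n$. This is a compatibility between the orientation of $\delta$ and a compatible system of primitive roots of unity; it can be verified directly by pulling back the cyclic covers $z \mapsto z^n$ along both $B_{\varepsilon}^* \hookrightarrow U^{\an}$ and $\Spec(K) \to U$ and tracking how $\delta$ and the canonical generator of $G_K$ act on a fiber. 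Because the infinity section is defined globally over $\aff_{\cc}^1$, the splittings of the topological and algebraic fundamental exact sequences that it induces are compatible throughout the construction, and the resulting comparison gives an isomorphism of the full actions (not merely of outer actions), as required by the ``moreover'' clause of Theorem \ref{thm comparison}(b) under the hypothesis $\alpha_i \in R$.
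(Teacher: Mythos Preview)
Your approach is essentially the same as the paper's: both set up a global family over a Zariski open $U\subset\aff_{\cc}^1$ (the paper calls it $X$), realize the topological and the algebraic short exact sequences as pullbacks of the one for $\mathfrak{X}|_U\to U$, and then argue that the images of $\widehat{\pi}_1(B_{\varepsilon}^*,z_0)$ and of $G_{\cc((x))}$ in $\pi_1^{\et}(U,z_0)$ agree, checking the final equality on cyclic covers $y^n=x$.

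Two places where your write-up is imprecise and the paper is more careful. First, the identification $\pi_1^{\et}(\proj_{\bar K}^1\smallsetminus\{\alpha_i\},\infty)\cong\pi_1^{\et}(\mathfrak{X}_{z_0},\infty_{z_0})$ is not ``base-change invariance under extension of algebraically closed fields'': the two fibers lie over \emph{different} points of $U$ (the geometric generic point versus the closed point $z_0$), so what you need is Grothendieck's specialization isomorphism for the smooth family $\mathfrak{X}|_U\to U$, and you must check that it is compatible with the middle terms of the short exact sequences (the paper does this via an explicit commutative diagram). Second, the inertia subgroup at $0$ is only well-defined up to conjugacy, and the map $G_K\to\pi_1^{\et}(U,z_0)$ depends on a choice of path from the geometric generic point to $z_0$; your cyclic-cover check shows the two candidate subgroups have the same image in every quotient by a cover of the form $y^n=x$, but that alone does not force them to be the \emph{same} subgroup of $\pi_1^{\et}(U,z_0)$. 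The paper first argues, using the kernel of $\pi_1^{\et}(U,z_0)\to\pi_1^{\et}(U\cup\{0\},z_0)$, that one can choose the path so that the image of $G_K$ is \emph{contained} in the image of $\widehat{\pi}_1(B_{\varepsilon}^*,z_0)$; only then does the cyclic-cover check upgrade containment to equality.
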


\begin{proof}

Let $X=\aff_{\cc}^1 \smallsetminus (\{0\}\cup \{z \ | \ a_i(z)=a_j(z)$ for some $i\neq j \})$. Let $\mathcal{F}' \to X$ be the family given by $$\mathcal{F}' = (\proj_{\cc}^{1} \times X) \smallsetminus \bigcup_{i = 1}^{d} \{(a_{i}(z), z) \ | \ z \in X\}.$$
 Then $(\mathcal{F}')^{\an} \to X^{\an}$ is clearly a fiber bundle which has $\mathcal{F} \to B_{\varepsilon}^{*}$ as a sub-bundle.  Fix an algebraic closure $\overline{ \cc((x))}$ of $\cc((x))$, and let $\bar \eta:\oper{Spec}(\overline{ \cc((x))})\rightarrow X$ be the corresponding geometric point lying over the generic point of $X$. The induced morphism  $\widehat{\zz} \cong G_{\cc((x))} \to \pi_{1}^{\et}(X, \bar \eta)$ is an  injection since every nontrivial element of $\pi_{1}^{\et}(X, \bar \eta)$ has infinite order. Similarly, the inclusion $B_{\varepsilon}^{*} \subset X^{\an}$  induces a map $\widehat{\zz} \cong \widehat{\pi}_{1}(B_{\varepsilon}^{*}, z_{0}) \to \pi_{1}^{\et}(X, z_{0})$ which is also an injection.

A path between $z_0$ and $\bar \eta$ induces an isomorphism of $\pi_{1}^{\et}(X, z_{0})$ with $\pi_{1}^{\et}(X, \bar \eta)$. We remark that such a path can be chosen so that the images of both $G_{\cc((x))}$ and $\widehat{\pi}_{1}(B_{\varepsilon}^{*}, z_{0})$ in $\pi_{1}^{\et}(X, z_{0})$ are equal. Indeed, if $Y = X \cup \{0\} \subset \cc$, then the kernel of $\pi_{1}^{\et}(X, z_{0})\rightarrow\pi_{1}^{\et}(Y, z_{0})$ is topologically generated by the conjugates of the image of $\widehat{\pi}_{1}(B_{\varepsilon}^{*}, z_{0})$; and for any path, the induced image of $G_{\cc((x))}$ in $\pi_{1}^{\et}(X, z_{0})$ is contained in this kernel. Since varying the path conjugates the image of $G_{\cc((x))}$, there exists a path so that its image is contained in the image of $\widehat{\pi}_{1}(B_{\varepsilon}^{*}, z_{0})$. To show that for such a path the images are equal, it suffices to check their restrictions with respect to a cover of $X$ of the form $y^{n} = x$, where it is clear.
   
Let $\infty_{\bar \eta}$ in $\mathcal{F}'_{\bar \eta}$ be the $\overline{\cc((x))}$-point lying over the point at infinity. Then it is easy to see that there exists a path between $\infty_{z_0}$ and $\infty_{\bar \eta}$ that induces an isomorphism $\pi_{1}^{\et}(\mathcal{F}', \infty_{\bar \eta}) \stackrel{\sim}{\to} \pi_{1}^{\et}(\mathcal{F}', \infty_{z_{0}})$ that commutes with the isomorphism $\pi_{1}^{\et}(X, \bar \eta) \stackrel{\sim}{\to} \pi_{1}^{\et}(X, z_{0})$ above, as well as with the infinity sections.  By \cite[Corollaire X.2.4]{grothendieck224revetements}, there exists a ``specialization map" 
$$\mathrm{sp}: \pi_{1}^{\et}(\mathcal{F}'_{\bar \eta}, \infty_{\bar \eta}) \twoheadrightarrow \pi_{1}^{\et}(\mathcal{F}'_{z_{0}}, \infty),$$
which Grothendieck's Specialization Theorem (\cite[Corollaire X.3.9]{grothendieck224revetements}) says is an isomorphism.  We obtain the following commutative diagram of short exact sequences of groups.

\begin{equation} \label{commutative diagram cc((x))} \xymatrix{ 1 \ar[r] & \widehat{\pi}_{1}(\mathcal{F}_{z_{0}}, \infty_{z_0}) \ar[r] \ar@{=}[d] & \widehat{\pi}_{1}(\mathcal{F}, \infty_{z_{0}}) \ar[r] \ar@{^{(}->}[d] & \widehat{\pi}_{1}(B_{\varepsilon}^{*}, z_{0}) \ar@/_1pc/[l] \ar[r] \ar@{^{(}->}[d] & 1
\\ 1 \ar[r] & \pi_{1}^{\et}(\mathcal{F}'_{z_{0}}, \infty_{z_0}) \ar[r] & \pi_{1}^{\et}(\mathcal{F}', \infty_{z_{0}}) \ar[r] & \pi_{1}^{\et}(X, z_{0}) \ar@/_1pc/[l] \ar[r] & 1 
\\ 1 \ar[r] & \pi_{1}^{\et}(\mathcal{F}'_{\bar \eta}, \infty_{\bar \eta}) \ar[r] \ar[u]^{\mathrm{sp} \wr} & \pi_{1}^{\et}(\mathcal{F}', \infty_{\bar \eta}) \ar[r] \ar[u]^{\wr} & \pi_{1}^{\et}(X, \bar \eta) \ar@/_1pc/[l] \ar[r] \ar[u]^{\wr} & 1
\\ 1 \ar[r] & \pi_{1}^{\et}(\mathcal{F}'_{\bar \eta}, \infty_{\bar \eta}) \ar[r] \ar[u]^{\wr} & \pi_{1}^{\et}(\mathcal{F}'_{\cc((x))}, \infty_{\cc((x))}) \ar[r] \ar@{^{(}->}[u] & G_{\cc((x))} \ar@/_1pc/[l] \ar[r] \ar@{^{(}->}[u] & 1 } \end{equation}

It is easy to check from the construction of $\mathrm{sp}$ in \cite[Expos\'{e} X]{grothendieck224revetements} that the middle two squares commute.  The top row is obtained by applying the right-exact functor of taking profinite completions to the short exact sequence (\ref{short exact sequence of topological fundamental groups}) of (topological) fundamental groups.  The fact that the bottom three rows are short exact sequences follows from \cite[Proposition XIII.4.3 and Exemples XIII.4.4]{grothendieck224revetements}.  The left-exactness of the top row follows immediately from a diagram chase; it is obvious that $\rho_{\tp}$ is induced by the splitting of this row.  The top and bottom maps shown in the middle column are injections by the Five Lemma.

The commutativity of the above diagram shows that the top and bottom rows are isomorphic as split short exact sequences and that therefore the induced actions are isomorphic, as desired.

\end{proof}

\begin{rmk} \label{rmk semidirect}

We observe that it is implicit in the proofs of Propositions \ref{prop reducing to K} and \ref{prop isomorphic to family over cc((x))} that if the $\alpha_i$'s satisfy the additional hypothesis in the second statement of Theorem \ref{thm comparison}(b), we have $\widehat{\pi}_1(\mathcal{F},\infty_{z_0})^{(p')} \cong \pi_1^{\et}(\mathbb{P}^1_K\smallsetminus \{\alpha_1,...,\alpha_d\},P)^{(p')}$.

\end{rmk}

\begin{rmk} \rm \label{rmk generating loops}
 
We make the identification $\pi_{1}^{\et}(\proj_{\bar{K}}^{1} \smallsetminus \{\alpha_{1}, ... , \alpha_{d}\}, \infty)^{(p')}$ with $\widehat{\pi}_{1}((\proj_{\cc}^{1})^{\an} \smallsetminus \{\alpha_{1}, ... , \alpha_{d}\}, \infty)^{(p')}$ via Riemann's Existence Theorem and the embedding $\bar{K} \hookrightarrow \cc$ fixed in the discussion leading up to Lemma \ref{lemma etale divisor}.  Then it is clear from the proofs of Propositions \ref{prop isomorphic to family over cc((x))} and \ref{prop reducing to K} that we can say a bit more about the isomorphism $\widehat{\pi}_{1}(\mathcal{F}_{z_{0}}, \infty_{z_{0}})^{(p')} \stackrel{\sim}{\to} \widehat{\pi}_{1}((\proj_{\cc}^{1})^{\an} \smallsetminus \{\alpha_{1}, ... , \alpha_{d}\}, \infty)^{(p')}$ given in the statement of Theorem \ref{thm comparison} (which is essentially a composition of maps coming from specializations and inclusions of algebraically closed fields).  Namely, this isomorphism takes the topological fundamental group of $(\proj_{\cc}^{1})^{\an} \smallsetminus \{a_{1}(z_{0}), ... , a_{d}(z_{0})\}$ to the topological fundamental group of $(\proj_{\cc}^{1})^{\an} \smallsetminus \{\alpha_{1}, ... , \alpha_{d}\}$ for $1 \leq i \leq d$.  Moreover, if $\sigma \in \widehat{\pi}_{1}(\mathcal{F}_{z_{0}}, \infty_{z_{0}})^{(p')}$ is the image of some element of $G_{\cc(x)}$ which lies in the conjugacy class of generators of inertia at the prime $(x - a_{i}(z_{0})) \in \Spec(\cc[x])$, then this isomorphism takes $\sigma$ to the image in $\widehat{\pi}_{1}((\proj_{\cc}^{1})^{\an} \smallsetminus \{\alpha_{1}, ... , \alpha_{d}\}, \infty)^{(p')}$ of an element $G_{\cc(x)}$ which lies in the conjugacy class of generators of inertia at the prime $(x - \alpha_{i}) \in \Spec(\cc[x])$.  If such a $\sigma$ actually lies in the topological fundamental group $\pi_{1}(\mathcal{F}_{z_{0}}, \infty_{z_{0}})$, then $\sigma$ may be viewed as a counterclockwise loop in $(\proj_{\cc}^{1})^{\an} \smallsetminus \{a_{1}(z_{0}), ... , a_{d}(z_{0})\}$ winding around only the missing point $a_{i}(z_{0})$, and the analogous statement is true for the image of $\sigma$ in $\pi_{1}((\proj_{\cc}^{1})^{\an} \smallsetminus \{\alpha_{1}, ... , \alpha_{d}\}, \infty)$ (see \cite[Remark 5.10]{volklein1996groups}).  In this way, we see that the above isomorphism of prime-to-$p$ \'{e}tale fundamental groups takes images of loops around the point $a_{i}(z_{0})$ to images of loops around the point $\alpha_{i}$ for each $i$.

\end{rmk}

\bibliographystyle{alpha}
\bibliography{bibfile2}

\end{document}